\title{The Segal Conjecture for Infinite Discrete Groups}
\author{L\"uck, W.}
        \address{Mathematisches Institut der Universit\"at Bonn\\
                Endenicher Allee 60\\
                53115 Bonn, Germany}
         \email{wolfgang.lueck@him.uni-bonn.de}
          \urladdr{http://www.him.uni-bonn.de/lueck}
         \date{January 2019}
              \keywords{Equivariant cohomotopy, Segal Conjecture for infinite discrete groups}
     \subjclass[2010]{55P91}
\DeclareMathAlphabet\EuR{U}{eur}{m}{n}
\SetMathAlphabet\EuR{bold}{U}{eur}{b}{n}
\theoremstyle{plain}
\newtheorem{theorem}{Theorem}[section]
\newtheorem{lemma}[theorem]{Lemma}
\newtheorem{problem}[theorem]{Problem}
\theoremstyle{definition}
\newtheorem{definition}[theorem]{Definition}
\newtheorem{example}[theorem]{Example}
\newtheorem{remark}[theorem]{Remark}
\global\let\c@equation=\c@theorem}
\newcommand{\comsquare}[8]                   
{\begin{CD}
#1 @>#2>> #3\\
@V{#4}VV @V{#5}VV\\
#6 @>#7>> #8
\end{CD}
}
\newcommand{\xycomsquare}[8]                   
{\xymatrix
{#1 \ar[r]^{#2} \ar[d]^{#4} &
#3 \ar[d]^{#5}  \\
#6\ar[r]^{#7} &
#8
}
}
\newcommand{\xycomsquareminus}[8]                      
{\xymatrix{#1 \ar[r]^-{#2} \ar[d]^-{#4} &
#3 \ar[d]^-{#5}  \\
#6\ar[r]^-{#7} &
#8
}
}
\newcommand{\IC}{{\mathbb C}}
\newcommand{\II}{{\mathbb I}}
\newcommand{\IQ}{{\mathbb Q}}
\newcommand{\IZ}{{\mathbb Z}}
\newcommand{\bfI}{{\mathbf I}}
\let\sect=\S
\newcommand{\curs}{\EuR}
\newcommand{\FGINJ}{\curs{FGINJ}}
\newcommand{\MODULES}{\curs{MODULES}}
\newcommand{\Or}{\curs{Or}}
\newcommand{\aut}{\operatorname{aut}}
\newcommand{\character}{\operatorname{char}}
\newcommand{\coker}{\operatorname{coker}}
\newcommand{\edge}{\operatorname{edge}}
\newcommand{\id}{\operatorname{id}}
\newcommand{\im}{\operatorname{im}}
\newcommand{\ind}{\operatorname{ind}}
\newcommand{\inverselim}{\operatorname{invlim}}
\newcommand{\pr}{\operatorname{pr}}
\newcommand{\Syl}{\operatorname{Syl}}
\newcommand{\tors}{\operatorname{tors}}
\newcommand{\calf}{\mathcal{F}}
\newcommand{\calh}{\mathcal{H}}
\newcommand{\calk}{\mathcal{K}}
\newcommand{\calp}{\mathcal{P}}
\newcommand{\pt}{\ast}
\newcommand{\higherlim}[3]{{\inverselim}_{#1}^{#2}#3}
\newcommand{\invlim}[2]{\higherlim{#1}{}{#2}}
\newcommand{\EGF}[2]{E_{#2}(#1)}                   
\newcommand{\version}[1]                    
{\begin{center} last edited on #1\\
last compiled on \today\\
name of texfile: \jobname
\end{center}
}
\newcounter{commentcounter}
\begin{document}

\typeout{---------------------------- segc.tex --------------------------}

\typeout{----------------------------  segc.tex  ----------------------------}

\maketitle


\typeout{-----------------------  Abstract  ------------------------}

\begin{abstract}
We formulate and prove a version of the Segal Conjecture for infinite groups.
For finite groups it reduces to the original version.
The condition that $G$ is finite is replaced in our setting
by the assumption that there exists  a finite model for
the  classifying space $\underline{E}G$  for proper actions.
This assumption is satisfied for instance for word hyperbolic groups
or cocompact discrete subgroups of Lie groups
with finitely many path components. As a consequence we get for such groups
$G$ that the zero-th stable cohomotopy of the classifying space $BG$
is isomorphic to the $I$-adic completion of the ring
given by the zero-th equivariant stable cohomotopy of $\underline{E}G$
for $I$ the augmentation ideal.
\end{abstract}


\typeout{--------------------   Section 0: Introduction --------------------------}

\setcounter{section}{-1}
\section{Introduction}
\label{sec:Introduction}

We first recall the Segal Conjecture for a finite group $G$.
The equivariant stable cohomotopy groups $\pi^n_G(X)$ of a $G$-$CW$-complex
are modules over the ring $\pi^0_G = \pi^0_G(\pt)$
which can be identified with the Burnside ring $A(G)$. Here and elsewhere $\pt$ denotes the one-point-space
The augmentation homomorphism
$A(G) \to \IZ$ is the ring homomorphism sending
the class of a finite set to its cardinality.  The
augmentation ideal $\II_G \subseteq A(G)$ is its kernel. Let
$\pi^m_G(X)\widehat{_ {\II_G}}$ be the $\II_G$-adic completion
$ \invlim{n \to \infty}{\pi^m_G(X)/\II_G^n \cdot \pi^m_G(X)}$ of $\pi^m_G(X)$.

The following result was formulated as a conjecture by Segal and proved
by Carlsson~\cite{Carlsson(1984)}.

\begin{theorem}[Segal Conjecture for finite groups]
\label{the:Segal_Conjecture_for_finite_groups}
For every finite group $G$ and
finite $G$-$CW$-complex $X$ there is an isomorphism
\[
\pi^m_G(X)\widehat{_ {\II_G}} \xrightarrow{\cong} \pi^m_s(EG \times_G X).
\]
\end{theorem}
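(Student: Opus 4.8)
The plan is to strip off the space variable, then reduce to $p$-groups, and finally run an induction on the order of a $p$-group, feeding the known case of elementary abelian $p$-groups in as the base of the induction; it is convenient throughout to reformulate the assertion for $X = \pt$ as the statement that the canonical map $\IS \to F(EG_+,\IS)$ of genuine $G$-spectra is an equivalence after $\II_G$-adic completion of the source, which on $G$-fixed points and homotopy groups reads $\pi^m_G(\pt)\widehat{_{\II_G}} \xrightarrow{\cong} \pi^m_s(BG_+)$. \emph{Reduction to $X = \pt$:} as functors of the finite $G$-$CW$-complex $X$ both sides of the asserted isomorphism are half-exact and carry long exact sequences of pairs --- for the right-hand side this is standard, and for the left-hand side one uses that $A(G) = \pi^0_G(\pt)$ is Noetherian, that $\pi^m_G(X)$ is a finitely generated $A(G)$-module for finite $X$ (via the tom Dieck splitting and induction over the finitely many equivariant cells), and that $\II_G$-adic completion is exact on finitely generated modules over a Noetherian ring. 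Since the comparison map is natural, an induction over the cells of $X$ together with the five lemma reduces the claim to $X = G/H$ for subgroups $H \le G$; for such $X$ one has $\pi^m_G(G/H) \cong \pi^m_H(\pt)$ and $EG\times_G(G/H) \simeq BH$, and the $\II_G$-adic and $\II_H$-adic topologies on the finitely generated abelian group $\pi^0_H(\pt) = A(H)$ coincide (a standard fact about Burnside rings, using $\res^G_H(\II_G) \subseteq \II_H$ and that $A(H)$ is finitely generated as an abelian group). Hence it suffices to show, for every finite group $G$, that $\pi^m_G(\pt)\widehat{_{\II_G}} \to \pi^m_s(BG_+)$ is an isomorphism.

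\emph{Reduction to $p$-groups:} after $\II_G$-adic completion the Burnside ring $A(G)$ splits as a finite product of $\IZ$ and one $p$-complete factor for each prime $p$ dividing $|G|$, and in the $p$-complete factor the class $[G/P]\in A(G)$ of a Sylow $p$-subgroup $P \le G$ becomes a unit, since its augmentation $[G:P]$ is prime to $p$. By the double coset formula, restriction and transfer along $P \le G$ therefore exhibit $\pi^m_G(\pt)\widehat{_{\II_G}}$, after $p$-completion, as a retract of $\pi^m_P(\pt)\widehat{_{\II_P}}$, and likewise $\pi^m_s(BG_+)$ as a retract of $\pi^m_s(BP_+)$, compatibly with the comparison map. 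As a retract of an isomorphism is an isomorphism, it is enough to treat $p$-groups $G$.

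\emph{Induction on $|G|$ for $p$-groups:} if $G$ is elementary abelian, invoke the theorem of Adams--Gunawardena--Miller (with the case $G = \IZ/p$ due to Lin for $p = 2$ and to Gunawardena for $p$ odd). Otherwise the Frattini subgroup of $G$ is a nontrivial normal subgroup and hence contains a central subgroup $C \cong \IZ/p$ of $G$; then $G/C$ is a strictly smaller $p$-group, and every $H \le G$ with $C \not\le H$ injects into $G/C$ and so is isomorphic to a subgroup of a strictly smaller $p$-group. Compare the isotropy separation cofiber sequence of $G$-spectra for the family $\calf[C]$ of subgroups not containing $C$,
\[
(E\calf[C])_+ \wedge \IS \longrightarrow \IS \longrightarrow \widetilde{E\calf[C]} \wedge \IS ,
\]
with the corresponding sequence obtained by replacing $\IS$ by $F(EG_+,\IS)$. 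The $(E\calf[C])_+$-term is built from cells $G/H_+$ with $C \not\le H$, whose contributions $\pi^*_H(\pt)$ are governed by the induction hypothesis for the strictly smaller $p$-groups into which the $H$ embed; the $\widetilde{E\calf[C]}$-term is identified, through geometric $C$-fixed points, with a statement about the genuine $G/C$-sphere, to which the induction hypothesis for $G/C$ applies, provided one checks that $\II_G$-adic completion is compatible with the geometric fixed point functor here. A five lemma argument then gives the claim for $G$ and closes the induction.

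The genuinely hard input is the base case, the Segal Conjecture for elementary abelian $p$-groups: already for $G = \IZ/p$ it reduces to identifying the comparison map with an isomorphism of $\Ext$-groups over the mod $p$ Steenrod algebra and carrying out the ensuing computation with the Singer construction, and the step from $\IZ/p$ to $(\IZ/p)^r$ requires its own delicate cofiber sequence argument. The remaining difficulty is organizational: making the geometric fixed point identification in the induction precise, and checking that $\II_G$-adic completion does not disturb the isotropy separation sequence --- in effect this is what forces one to run the whole argument at the level of $G$-spectra (and pro-objects) rather than by naively completing homotopy groups degreewise.
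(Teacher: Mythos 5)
The paper offers no proof of this statement to compare yours against: Theorem~\ref{the:Segal_Conjecture_for_finite_groups} is quoted as Carlsson's theorem and used purely as input --- it enters as condition~\eqref{the:strategy_of_proof_of_the_completion_theorem:finite_groups} of Theorem~\ref{the:strategy_of_proof_of_the_completion_theorem} and is discharged in Section~\ref{sec:The_Segal_Conjecture_for_Infinite_Groups} by citing \cite{Carlsson(1984)}, the introduction saying explicitly that no new proof will be given. What you have written is a reasonable outline of the proof that exists in the literature: the reduction in the space variable to $X=\pt$, the May--McClure reduction to Sylow $p$-subgroups via transfer, Carlsson's induction on the order of a $p$-group through the isotropy separation sequence for the family of subgroups not containing a central $\IZ/p$, and the Adams--Gunawardena--Miller (Lin, Gunawardena) base case for elementary abelian groups. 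It is, however, a plan and not a proof, and two points deserve a sharper label than you give them. First, ``$[G/P]$ becomes a unit because its augmentation $[G:P]$ is prime to $p$'' needs the supplementary fact that elements of $\II_G$ are topologically nilpotent in the completed source and act nilpotently on each skeleton on the target --- the latter is precisely Lemma~\ref{lem:nilpotence_of_In} of this paper; only then is ``unit plus topologically nilpotent'' a unit on both sides of the comparison. Second, calling the inductive step ``organizational'' undersells it: the term $\widetilde{E\calf[C]}\wedge F(EG_+,\IS)$ is \emph{not} computed by naively applying geometric $C$-fixed points to get the $G/C$-sphere statement, and controlling it is the technical heart of Carlsson's paper, not bookkeeping. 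Finally, your reduction to $X=G/H$ quietly uses that the $\II_G$-adic and $\II_H$-adic topologies on $A(H)$ agree; that is true, but it rests on Dress's description of the prime ideals of the Burnside ring --- essentially the same argument this paper carries out in Section~\ref{sec:The_Segal_Conjecture_for_Infinite_Groups} to verify condition~\eqref{the:strategy_of_proof_of_the_completion_theorem:ideals}, so it should be flagged rather than waved through as ``standard.''
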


In particular we get for $X = \pt$ and $m = 0$ an isomorphism
\[
A(G)\widehat{_ {\II_G}}  \xrightarrow{\cong}  \pi^0_s(BG).
\]

The purpose of this paper is to formulate and prove a version of it for infinite
(discrete) groups, i.e., we will show

\begin{theorem}{(Segal Conjecture for infinite groups).}
\label{the:Segal_Conjecture_for_infinite_groups} Let $G$ be a
(discrete) group. Let $X$ be a finite proper $G$-$CW$-complex.
Let $L$ be a proper finite dimensional $G$-$CW$-complex with the property that
there is an upper bound on the order of its isotropy groups. 
Let $f \colon X \to L$ be a $G$-map.

Then the map of pro-$\IZ$-modules
\[
\lambda^m_G(X) \colon \left\{\pi^m_G(X)/\II_G(L)^n \cdot \pi^m_G(X)\right\}_{n \ge 1}
\xrightarrow{\cong} \left\{\pi_s^m\left((EG \times_G X)_{(n-1)}\right)\right\}_{n \ge 1}
\]
defined
in~\eqref{map_lambda_of_pro-modules} is an isomorphism of pro-$\IZ$-modules.

In particular we obtain an isomorphism
\[
 \pi^m_G(X)\widehat{_ {\II_G(L)}} \cong \pi_s^m(EG \times_G X).
\]
If there is a
finite $G$-$CW$-model for $\underline{E}G$, we obtain an isomorphism
\[
\pi^m_G(\underline{E}G)\widehat{_ {\II_G(\underline{E}G)}} \cong \pi_s^m(BG).
\]
\end{theorem}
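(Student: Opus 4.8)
The plan is to establish the pro-isomorphism $\lambda^m_G(X)$ by reducing to the case of a single orbit, where it is governed by the classical Segal Conjecture (Theorem~\ref{the:Segal_Conjecture_for_finite_groups}), and then to deduce the two ``in particular'' statements formally. The first of these follows by passing to inverse limits over $n$: on the source one obtains the $\II_G(L)$-adic completion $\pi^m_G(X)\widehat{_{\II_G(L)}}$ by definition, while on the target one uses the skeletal filtration of $EG \times_G X$ by the subcomplexes $(EG \times_G X)_{(n-1)}$ and the vanishing of the relevant $\lim^1$-terms --- once the pro-isomorphism is known, all pro-systems in sight are pro-isomorphic to the source systems, whose structure maps are the surjections $\pi^m_G(X)/\II_G(L)^{n+1}\cdot\pi^m_G(X) \to \pi^m_G(X)/\II_G(L)^{n}\cdot\pi^m_G(X)$ and hence Mittag--Leffler. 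The second statement follows by applying the main assertion to $X = L = \eub{G}$ with $f = \id$: a finite $G$-$CW$-model for $\eub{G}$ is proper, finite dimensional, and has only finitely many orbit types, hence an upper bound on the orders of its isotropy groups; and since $EG \times \eub{G}$ is free and contractible it is a model for $EG$, so $EG \times_G \eub{G} \simeq BG$ and $\pi^m_s(EG \times_G \eub{G}) \cong \pi^m_s(BG)$.

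For the reduction to orbits I would record that both $X \mapsto \{\pi^m_G(X)/\II_G(L)^n\cdot\pi^m_G(X)\}_{n\ge 1}$ and $X \mapsto \{\pi^m_s((EG \times_G X)_{(n-1)})\}_{n\ge 1}$ are $\IZ$-graded cohomology theories on finite proper $G$-$CW$-complexes with values in pro-$\IZ$-modules, that $\lambda^m_G$ is a morphism between them compatible with the connecting maps of pairs, and that the class of those $X$ admitting a $G$-map to $L$ for which $\lambda^*_G(X)$ is an isomorphism is closed under $G$-homotopy equivalence, finite disjoint unions and $G$-pushouts along inclusions $A \hookrightarrow X$ of subcomplexes with $X/A$ a wedge of pointed $G$-spaces $(G/H_i)_+ \wedge S^{d_i}$, the $H_i$ being finite (the pushout step by the five-lemma applied to the map of long exact sequences of the pair, noting that a $G$-map $X \to L$ passes to $A$ and to the cells). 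For the target the cohomology-theory property is immediate, since a levelwise-exact tower of long exact sequences is exact in the abelian category of pro-$\IZ$-modules, with no $\lim^1$-corrections. For the source, exactness is a pro-version of the Artin--Rees lemma and rests on a suitable finite-generation and Noetherianness statement for $\pi^*_G$ of a finite proper $G$-$CW$-complex over the coefficient ring $\pi^0_G(\pt)$. As $X$ has finitely many cells and comes with the $G$-map $f$, this reduces the theorem to $X = G/H$ with $H \le G$ finite and $L^H \ne \emptyset$.

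In the orbit case there is a natural identification $\pi^m_G(G/H) \cong \pi^m_H(\pt)$ under which the $\pi^0_G(\pt)$-action becomes the $A(H) = \pi^0_H(\pt)$-action along $\res^G_H$, together with a homotopy equivalence $EG \times_G G/H \simeq BH$ compatible with skeletal filtrations. Theorem~\ref{the:Segal_Conjecture_for_finite_groups}, in its pro-refinement (part of Carlsson's solution) identifying the skeletal filtration of $BH$ with the $\II_H$-adic filtration of the $A(H)$-module $\pi^m_H(\pt)$, then yields a pro-isomorphism $\{\pi^m_H(\pt)/\II_H^n\cdot\pi^m_H(\pt)\}_n \xrightarrow{\cong} \{\pi^m_s(BH^{(n-1)})\}_n$ compatible with $\lambda^m_G(G/H)$. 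It therefore remains to see that the pro-systems $\{\pi^m_H(\pt)/\II_H^n\cdot\pi^m_H(\pt)\}_n$ and $\{\pi^m_H(\pt)/\II_G(L)^n\cdot\pi^m_H(\pt)\}_n$ agree up to pro-isomorphism, i.e.\ that the $\II_H$-adic and $\II_G(L)$-adic topologies on $\pi^m_H(\pt)$ coincide. Since the $G$-map $G/H \to L$ forces $L^H \ne \emptyset$, this is the point at which the finite-dimensionality of $L$ and the bound on the orders of its isotropy groups enter, via the defining property of $\II_G(L)$: under these hypotheses the ideal of $A(H)$ generated by $\res^G_H(\II_G(L))$ has the same radical as $\II_H$ --- by the description of the prime spectrum of $A(H)$ through the fixed-point ring homomorphisms, the two vanishing loci coincide --- and, $A(H)$ being Noetherian, the two adic topologies are then equal.

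I expect the heart of the argument to be these two ``input'' steps rather than the formal assembly. First, the pro-Artin--Rees statement behind exactness of the source theory: for an infinite group $G$ one must isolate the right finiteness --- finite generation of $\pi^*_G(X)$ over $\pi^0_G(\pt)$, or over $\pi^0_G(\eub{G})$ --- together with the Noetherianness used; there is nothing to do here when $G$ is finite, where $A(G)$ is visibly a finitely generated $\IZ$-algebra. Second, the comparison of the $\II_G(L)$-adic and $\II_H$-adic topologies on $\pi^*_H(\pt)$ for the finite subgroups $H$ of $G$ occurring in $X$; this is the one place where finite-dimensionality of $L$, the bound on its isotropy orders, and the presence of the $G$-map $f \colon X \to L$ are all genuinely needed, and it is the conceptual new ingredient beyond the finite-group case. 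The remaining parts --- the reduction to orbits, the identification of the orbit case with Theorem~\ref{the:Segal_Conjecture_for_finite_groups}, and the limit arguments for the two corollaries --- are routine bookkeeping with cohomology theories and pro-objects.
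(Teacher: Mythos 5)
Your overall architecture coincides with the paper's: reduce to orbits $G/H$ by induction over equivariant cells using Mayer--Vietoris and a five lemma for pro-modules, use a pro-Artin--Rees statement (finite generation over the Noetherian ring $\calh^0(\pt)=\IZ$) to make the source a cohomology theory in pro-modules, feed in Carlsson's theorem in its pro-form for the orbit case, and then compare the $\II_G(L)$-adic and $\II(H)$-adic topologies on $\pi^m_H(\pt)$. The formal assembly and the two corollaries are handled exactly as in the paper.

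However, there is a genuine gap precisely at the step you yourself identify as the conceptual heart. You assert that the ideal $J_f\subseteq A(H)$ generated by the image of $\II_G(L)$ under $\pi^0_G(L)\to\pi^0_G(G/H)\cong A(H)$ has the same radical as $\II(H)$, ``by the description of the prime spectrum of $A(H)$ through the fixed-point ring homomorphisms.'' Dress's description of $\operatorname{Spec}(A(H))$ by the characters $\character^H_K$ tells you nothing, by itself, about how large the image of $\II_G(L)$ in $A(H)$ is; a priori that image could even be zero (in which case every prime contains $J_f$ but only the primes $\calp(K,p)$ with $K$ a $p$-group contain $\II(H)$), since for infinite $G$ one has no direct control over the ring $\pi^0_G(L)$. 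The paper needs two nontrivial inputs to close this: first, an edge-homomorphism argument in the equivariant Atiyah--Hirzebruch spectral sequence for $L$ (Lemma~\ref{lem:edge_argument}), using finite-dimensionality of $L$ and the uniform finiteness of $\pi^q_H(\pt)$ for $q\neq 0$ over $H\in\calf(L)$ to show that a power $x^k$ of any element $x$ coming from $\invlim{G/K \in \Or(G;\calf(L))}{\II(K)}$ lifts to an element of $\II_G(L)$; second, the explicit construction of the compatible elements $[[S_m]]-|S_m|\cdot 1$ and $[[S_m/\Syl_p(S_m)]]-|S_m/\Syl_p(S_m)|\cdot 1$ in that inverse limit (this is where the uniform bound $m$ on the orders of the isotropy groups is used to make the construction consistent over the whole family), whose images under $\character^H_K$ detect, via Dress's classification, that any prime $\calp(K,p)$ containing them forces $K=\{1\}$ or $K$ a $p$-group, hence $\II(H)\subseteq\calp(K,p)$. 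Neither ingredient appears in your sketch, and without them the claimed coincidence of vanishing loci is unsupported; this is where the hypotheses on $L$ actually do their work, rather than merely ``via the defining property of $\II_G(L)$.''
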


Here $\underline{E}G$ is the classifying space for proper $G$-actions and $\pi^*_G(X)$ is
equivariant stable cohomotopy as defined in~\cite[Section~6]{Lueck(2005r)}.  The ideal
$\II_G(L)$ is the augmentation ideal in the ring $\pi^0_G(L)$ (see Definition~\ref{def:augmentation_ideal}).
We view $\pi^*_G(X)$ as
$\pi^0_G(L)$-module by the multiplicative structure on equivariant stable cohomotopy and
the map $f$. We denote by $\pi^m_G(X)\widehat{_ {\II_G(L)}}$ its $\II_G(L)$-completion.
More explanations will follow in the main body of the text.

In~\cite{Lueck(2005r)} various mutually distinct notions of a Burnside ring of  a group $G$ are 
introduced which all agree with the standard notion for finite  $G$.
If there is a finite $G$-$CW$-model for $\underline{E}G$, then the homotopy theoretic
definition is $A(G) := \pi^0_G(\underline{E}G)$, we define the ideal $\bfI_G \subseteq A(G)$
to be $\bfI_G(\underline{E}G)$,
and we get in this notation from Theorem~\ref{the:Segal_Conjecture_for_infinite_groups} an isomorphism
\[
A(G)\widehat{_{I_G}} \cong \pi_s^0(BG).
\]

We will actually formulate for every equivariant cohomology theory $\calh_?^*$ with multiplicative structure a
Completion Theorem (see Problem~\ref{pro:formulation_of_the_Completion_Theorem}).  It is
not expected to be true in all cases. We give a strategy for its proof in
Theorem~\ref{the:strategy_of_proof_of_the_completion_theorem}.  We show that
this  applies to equivariant stable cohomotopy, thus proving
Theorem~\ref{the:Segal_Conjecture_for_infinite_groups}. It also applies to equivariant
topological $K$-theory, where the Completion Theorem for infinite groups has already been proved
in~\cite{Lueck-Oliver(2001a)}.

If $G$ is finite, we can take $L = \underline{E}G = \pt$ and then
Theorem~\ref{the:Segal_Conjecture_for_infinite_groups} reduces to
Theorem~\ref{the:Segal_Conjecture_for_finite_groups}. We will not give a new proof of
Theorem~\ref{the:Segal_Conjecture_for_finite_groups}, but use it as input in the proof of
Theorem~\ref{the:Segal_Conjecture_for_infinite_groups}.

This paper is part of a general program to systematically study equivariant homotopy theory,
which is well-established for finite groups and compact Lie groups, for
infinite groups and non-compact Lie groups. The motivation comes among other things
from the Baum-Connes Conjecture and the Farrell-Jones Conjecture.

\subsection*{Acknowledgement}
The paper has been  financially supported by the Leibniz-Preis of the author
 granted by the {DFG}, the ERC Advanced Grant ``KL2MG-interactions''
 (no.  662400) of  the author granted by the European Research Council, and
 by the Deutsche Forschungsgemeinschaft (DFG, German Research Foundation)
under Germany's Excellence Strategy \--- GZ 2047/1, Projekt-ID 390685813.by the Cluster of Excellence 
``Hausdorff Center for Mathematics'' at Bonn. The author wants to thank the referee for his detailed and useful report.


\typeout{--------------------   Section 1 ---------------------------------------}

\section{Equivariant Cohomology Theories with Multiplicative Structure}
\label{sec:Equivariant_Cohomology_Theories_with_Multiplicative_Structures}

We briefly recall the axioms of a (proper) equivariant cohomology $\calh_?^*$ theory with
values in $R$-modules with multiplicative structure.  More details can be found
in~\cite{Lueck(2005c)}.

Let $G$ be a (discrete) group. Let $R$ be a commutative ring with
unit. A \emph{(proper) $G$-cohomology theory $\calh^*_G$ with values in $R$-modules} assigns to any pair
$(X,A)$ of (proper) $G$-$CW$-complexes $(X,A)$ a $\IZ$-graded $R$-module $\{\calh^n_G(X,A)
\mid n \in \IZ\}$ such that $G$-homotopy invariance holds and there exists long exact
sequences of pairs and long exact Mayer-Vietoris sequences. Often one also requires the disjoint union axiom
what we will need not here since all our disjoint unions will be over finite index sets.

A \emph{multiplicative structure} is given by a collection of $R$-bilinear pairings
\begin{eqnarray*}
\cup \colon \calh^{m}_G(X,A) \otimes_R \calh^{n}_G(X,B) & \to &
\calh^{m+n}_G(X,A\cup B).
\end{eqnarray*}
This product is required to be graded commutative, to be associative, to have a unit $1
\in \calh^0_G(X)$ for every (proper) $G$-$CW$-complex $X$, to be compatible with
boundary homomorphisms and to be natural with respect to $G$-maps.

Let $\alpha\colon H \to G$ be a group homomorphism.
Given an $H$-space $X$, define the \emph{induction of $X$ with $\alpha$}
to be the $G$-space $\ind_{\alpha} X$ which  is the quotient of
$G \times X$ by the right $H$-action
$(g,x) \cdot h := (g\alpha(h),h^{-1} x)$
for $h \in H$ and $(g,x) \in G \times X$.
If $\alpha\colon H \to G$ is an inclusion, we also write $\ind_H^G$ instead of
$\ind_{\alpha}$.

A \emph{(proper) equivariant cohomology theory
$\calh_?^*$ with values in $R$-modules}
consists of a collection of (proper)
$G$-cohomology theories $\calh^*_G$ with values in $R$-modules for each group $G$
together with the following so called \emph{induction structure}:
given a group homomorphism $\alpha\colon  H \to G$ and  a (proper) $H$-$CW$-pair
$(X,A)$ there are for each $n \in \IZ$ natural homomorphisms
\begin{eqnarray}
\ind_{\alpha}\colon \calh^n_G(\ind_{\alpha}(X,A))
&\to &
\calh^n_H(X,A)\label{induction_structure}
\end{eqnarray}
If $\ker(\alpha)$ acts freely on $X$, then
$\ind_{\alpha}\colon \calh^n_G(\ind_{\alpha}(X,A))
\to \calh^n_H(X,A)$
is bijective for all $n \in \IZ$.
The induction structure is required to be compatibility with the boundary homomorphisms,
to be functorial in $\alpha$ and to be compatible with inner automorphisms.

Sometimes we will need the following lemma whose elementary proof is analogous to the
one in~\cite[Lemma 1.2]{Lueck(2002b)}.

 \begin{lemma}\label{lem:calh_G(G/H)_and_calh_H(ast)}
 Consider finite subgroups $H,K \subseteq G$ and an element
 $g \in G$ with $gHg^{-1} \subseteq K$.
 Let $R_{g^{-1}}\colon G/H \to G/K$ be the $G$-map
 sending $g'H$ to $g'g^{-1}K$ and
 $c(g)\colon H \to K$ be the group homomorphism sending $h$ to $ghg^{-1}$.
 Denote by $\pr\colon (\ind_{c(g)\colon H \to K}\pt) \to \pt$ the projection
 to the one-point space $\pt$.
 
 Then the following diagram commutes
 \[
\xymatrix@!C=9em{
 \calh_G^n(G/K)  \ar[r]^{\calh^n_G(R_{g^{-1}})} \ar[d]_{\ind_K^G}
&
\calh_G^n(G/H) \ar[d]^{\ind_H^G}
  \\
 \calh^n_K(\pt) \ar[r]_{\ind_{c(g)} \circ \calh_K^n(\pr)}
&
\calh^n_H(\pt)
}
\]
 \end{lemma}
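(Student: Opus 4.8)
The plan is to reduce everything to the three structural properties of an induction structure recalled above --- functoriality in the group homomorphism $\alpha$, compatibility with inner automorphisms, and naturality in the space --- after rewriting the $G$-map $R_{g^{-1}}$ as a composite that is visibly assembled from inductions. Set $H' := gHg^{-1}$, so that $H' \subseteq K$ and $c(g)$ restricts to an isomorphism of groups $H \xrightarrow{\cong} H'$. First I would record three elementary identifications. (i) The $G$-map $R_{g^{-1}}$ factors as
\[
G/H \xrightarrow{\,\sigma\,} G/H' \xrightarrow{\,p\,} G/K, \qquad \sigma(g'H) := g'g^{-1}H', \quad p(g'H') := g'K,
\]
where $\sigma$ is a $G$-homeomorphism (with inverse $g''H' \mapsto g''gH$) and $p$ is the canonical projection; indeed $p \circ \sigma(g'H) = g'g^{-1}K = R_{g^{-1}}(g'H)$. (ii) The $K$-space $\ind_{c(g)\colon H \to K}\pt$ is $K/H'$, the map $\pr$ of the lemma is the projection $K/H' \to \pt$, and $\ind_K^G(\pr) = p$ under the identifications $\ind_K^G(K/H') = G/H'$ and $\ind_K^G(\pt) = G/K$. (iii) The homeomorphism $\sigma$ is precisely the canonical $G$-homeomorphism furnished by compatibility with inner automorphisms for the inner automorphism $c_g \colon G \to G$, $x \mapsto gxg^{-1}$, evaluated on the $G$-space $G/H$, after one identifies $\ind_{c_g}(G/H) \cong G/H'$ via the isomorphism coming from functoriality; checking that this identification has the claimed coset formula is a short computation.

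Granting (i)--(iii), I would run the diagram chase starting from the lower left corner. Naturality of $\ind_K^G$ in the $K$-space applied to the $K$-map $\pr\colon K/H' \to \pt$ gives $\calh^n_K(\pr) \circ \ind_K^G = \ind_K^G \circ \calh^n_G(p)$, so the ``down--then--right'' composite equals $\ind_{c(g)} \circ \ind_K^G \circ \calh^n_G(p)$. Functoriality of induction in $\alpha$ identifies $\ind_{c(g)\colon H \to K} \circ \ind_K^G$ with $\ind_\theta$ for the homomorphism $\theta \colon H \to G$, $h \mapsto ghg^{-1}$; writing $\theta = c_g \circ i_H$ with $i_H \colon H \hookrightarrow G$ the inclusion, a second application of functoriality gives $\ind_\theta = \ind_H^G \circ \ind_{c_g}$, where the source of $\ind_{c_g}$ is $\calh^n_G(\ind_{c_g}(G/H)) = \calh^n_G(G/H')$ as in (iii). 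Compatibility with inner automorphisms now turns $\ind_{c_g}$ into $\calh^n_G(\sigma)$, so the composite becomes
\[
\ind_H^G \circ \calh^n_G(\sigma) \circ \calh^n_G(p) = \ind_H^G \circ \calh^n_G(p \circ \sigma) = \ind_H^G \circ \calh^n_G(R_{g^{-1}}),
\]
which is exactly the ``right--then--down'' composite. Hence the square commutes.

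I expect the only genuine difficulty to lie in the bookkeeping in (iii): pinning down the direction and the explicit coset formula of the inner-automorphism homeomorphism, and making sure that the identification $\ind_{c_g}(G/H) \cong G/H'$ used there is literally the one produced by the functoriality isomorphism invoked in the chase (so that ``$\ind_{c_g} = \calh^n_G(\sigma)$'' holds on the nose). Once the conventions are fixed consistently, the remaining verifications --- that $p \circ \sigma = R_{g^{-1}}$ and $\ind_K^G(\pr) = p$ --- are immediate, and the rest of the argument is purely formal, following the pattern of~\cite[Lemma~1.2]{Lueck(2002b)}.
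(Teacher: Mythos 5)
Your proof is correct: the factorization $R_{g^{-1}} = p \circ \sigma$ through $G/H'$ for $H' = gHg^{-1}$, the naturality of $\ind_K^G$ applied to $\pr \colon K/H' \to \pt$, the two applications of functoriality in $\alpha$, and the conversion of $\ind_{c_g}$ into $\calh^n_G(\sigma)$ via compatibility with inner automorphisms (with the standard convention that the canonical $G$-homeomorphism is $x \mapsto (1,g^{-1}x)$, which does yield exactly $\sigma(g'H) = g'g^{-1}H'$ under the functoriality identification) fit together as you describe. The paper itself gives no proof, deferring to the analogue in~\cite[Lemma~1.2]{Lueck(2002b)}, and your argument is precisely that standard formal argument transcribed to the cohomological induction structure.
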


 Let $\calh_?^*$ be a (proper) equivariant cohomology theory. A \emph{multiplicative
   structure} on it assigns a multiplicative structure to the associated (proper)
 $G$-coho\-mo\-logy theory $\calh^*_G$ for every group $G$ such that for each group
 homomorphism $\alpha \colon H \to G$ the maps given by the induction structure
 of~\eqref{induction_structure} are compatible with the multiplicative structures on
 $\calh^*_G$ and $\calh^*_H$.

\begin{example}{\bf Equivariant cohomology theories coming from non-equi\-va\-riant ones.}%
\label{exa:equivariant_cohomology_theories}
 Let $\calk^*$ be a (non-equivariant) cohomology theory with multiplicative structure,
 for instance singular cohomology or topological $K$-theory. We can assign to it an
 equivariant cohomology theory with multiplicative structure $\calh_?^*$ in two ways.
 Namely, for a group $G$ and a pair of $G$-$CW$-complexes $(X,A)$ we define
 $\calh^n_G(X,A)$ by $\calk^n(G\backslash (X,A))$ or by $\calk^n(EG \times_G(X,A))$. 
\end{example}

\begin{example}{\bf (Equivariant topological $K$-theory).}
\label{exa:equivariant_topological_K-theory}
In~\cite{Lueck-Oliver(2001a)} equivariant topological $K$-theory is defined for finite
proper equivariant $CW$-complexes in terms of equivariant vector bundles. 
It reduces to the classical notion which appears for instance in~\cite{Atiyah(1967)}. Its
relation to equivariant $KK$-theory is explained in~\cite{Phillips(1988)}.  This
definition is extended to (not necessarily finite) proper equivariant $CW$-complexes
in~\cite{Lueck-Oliver(2001a)} in terms of equivariant spectra using $\Gamma$-spaces and yields a proper
equivariant cohomology theory $K_?^*$ with multiplicative as explained in~\cite[Example~1.7]{Lueck(2005c)}.
It has the property that for any finite subgroup $H$ of
a group $G$ we have
\[
K^n_G(G/H)  =  K^n_H(\pt)  =  \left\{
\begin{array}{lll}
R_{\IC}(H) & & n \text{ even};
\\
\{0\}       & & n \text{ odd},
\end{array}
\right.
\]
where $R_{\IC}(H)$ denote the complex representation ring of $H$.
\end{example}

\begin{example}{\bf (Equivariant Stable Cohomotopy).}
\label{exa:equivariant_cohomotopy}
In~\cite[Section~6]{Lueck(2005r)} equivariant stable cohomotopy $\pi_?^*$ is defined
for finite proper equivariant $CW$-complexes in terms of maps of sphere bundles associated
of equivariant vector bundles. For finite $G$ it reduces to the classical notion.  This
definition is extended to arbitrary proper $G$-$CW$-complexes 
by Degrijse-Hausmann-Lueck-Patchkoria-Schwede~\cite{Degrijse-Hausmann-Lueck-Patchkoria-Schwede(2019)}, where 
a systematic study of equivariant homotopy theory for (not necessarily compact) Lie groups and proper $G$-$CW$-complexes
is developed.

Let $H \subseteq G$ be a finite subgroup.  Recall that by the induction structure we have
$\pi^n_G(G/H) = \pi^n_H(\pt)$.  The equivariant stable homotopy groups $\pi_H^n$ are
computed in terms of the splitting due to Segal and tom Dieck
(see~\cite[Proposition~2]{Segal(1971)} and~\cite[Theorem~7.7 in Chapter~II on
page~154]{Dieck(1987)}) by
\[
\pi^H_n = \pi_{-n}^H = \bigoplus_{(K)} \pi^s_{-n}(BW_HK),
\]
where $\pi_{-n}^s$
denotes (non-equivariant) stable homotopy and $(K)$ runs through the conjugacy classes of
subgroups of $H$. In particular we get
\[
\begin{array}{lllll} |\pi^n_G(G/H)| & < & \infty & & n \le -1;
  \\
  \pi^0_G(G/H) & = & A(H); & &
  \\
  \pi^n_G(G/H) & = & \{0\} & & n \ge 1,
\end{array}
\]
where $A(H)$ is the Burnside ring. 
\end{example}


\typeout{--------------------   Section 2 ---------------------------------------}

\section{Some Preliminaries about Pro-Modules}
\label{sec:Some_Preliminaries_about_Pro-Modules}

It will be crucial to handle pro-systems and pro-isomorphisms and not
to pass directly to inverse limits.
In this section we fix our notation for handling pro-$R$-modules for a
commutative ring with unit $R$. 
For the definitions in full generality see for instance~\cite[Appendix]{Artin-Mazur(1969)}
or~\cite[\sect 2]{Atiyah-Segal(1969)}.

For simplicity, all pro-$R$-modules dealt
with here will be indexed by the positive  integers.  We
write $\{M_n,\alpha_n\}$ or briefly $\{M_n\}$ for the inverse system
\[
 M_0 \xleftarrow{\alpha_1} M_1 \xleftarrow{\alpha_2} M_2 \xleftarrow{\alpha_3}
M_3 \xleftarrow{\alpha_4} \ldots.
\]
and also write
$\alpha_n^m := \alpha_{m+1} \circ \cdots \circ \alpha_{n}\colon M_n \to M_m$
for $n > m$ and put $\alpha^n_n =\id_{M_n}$.  For the purposes here, it will
suffice (and greatly simplify the notation)
to work with ``strict'' pro-homomorphisms
$\{f_n\} \colon \{M_n,\alpha_n\} \to \{N_n,\beta_n\}$, i.e.,
a collection of  homomorphisms $f_n \colon M_n \to N_n$
for $n \ge 1$ such that  $\beta_{n}\circ f_n = f_{n-1}\circ\alpha_{n}$ holds
for each $ n\ge 2$.  Kernels  and cokernels of strict homomorphisms are
defined in the obvious way, namely levelwise.

A pro-$R$-module $\{M_n,\alpha_n\}$  will be called
\emph{pro-trivial} if for each $m \ge 1$, there
is some $n\ge m$ such that  $\alpha^m_n = 0$.  A strict homomorphism
$f\colon \{M_n,\alpha_n\} \to \{N_n,\beta_n\}$ is a
\emph{pro-isomorphism}
if and only if $\ker(f)$ and $\coker(f)$ are both
pro-trivial, or, equivalently, for each $m\ge 1$ there is some
$n\ge m$ such that $\im(\beta_n^m) \subseteq \im(f_m)$
and $\ker(f_n) \subseteq \ker(\alpha_n^m)$.
A sequence of strict homomorphisms
\[
\{M_n,\alpha_n\} \xrightarrow{\{f_n\}} \{M_n',\alpha_n'\}
\xrightarrow{g_n} \{M_n'',\alpha_n''\}
\]
will be called \emph{exact} if the sequences of $R$-modules
$M_n \xrightarrow{f_n} N_n \xrightarrow{g_n} M_n''$ is
exact for each $n \ge 1$, and it is called \emph{pro-exact} if
$g_n \circ f_n = 0$  holds for $n \ge 1$ and
the pro-$R$-module  $\{\ker(g_n)/\im(f_n)\bigr\}$ is pro-trivial.

The elementary proofs of the following two lemmas can be found for instance
in~\cite[Section~2]{Lueck(2007)}.

\begin{lemma}\label{lem:pro-exactness_and_limits}
Let $0 \to \{M_n',\alpha_n'\} \xrightarrow{\{f_n\}} \{M_n,\alpha_n\}
\xrightarrow{\{g_n\}} \{M_n'',\alpha_n''\} \to 0$ be a pro-exact sequence
of
pro-$R$-modules. Then there is a natural exact sequence
\begin{multline*}
0 \to \invlim{n \ge 1}{M_n'} \xrightarrow{\invlim{n \ge 1}{f_n}}
\invlim{n \ge 1}{M_n} \xrightarrow{\invlim{n \ge 1}{g_n}}
\invlim{n \ge 1}{M_n''} \xrightarrow{\delta}
\\
 \higherlim{n \ge 1}{1}{M_n'} \xrightarrow{\higherlim{n \ge 1}{1}{f_n}}
\higherlim{n \ge 1}{1}{M_n} \xrightarrow{\higherlim{n \ge 1}{1}{g_n}}
\higherlim{n \ge 1}{1}{M_n''} \to 0.
\end{multline*}
In particular a pro-isomorphism
$\{f_n\} \colon \{M_n,\alpha_n\} \to \{N_n,\beta_n\}$ induces isomorphisms
\[
\begin{array}{llcl}
\invlim{n \ge 1}{f_n} \colon & \invlim{n \ge 1}{M_n}
& \xrightarrow{\cong} & \invlim{n \ge 1}{N_n};
\\
\higherlim{n \ge 1}{1}{f_n} \colon & \higherlim{n \ge 1}{1}{M_n}
& \xrightarrow{\cong} & \higherlim{n \ge 1}{1}{N_n}.
\end{array}
\]
\end{lemma}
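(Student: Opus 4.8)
The plan is to deduce the six-term sequence from the classical one attached to a \emph{levelwise} short exact sequence of inverse systems, together with the vanishing of inverse limits and their first derived functors on pro-trivial systems; the concluding isomorphism statement then follows by a short diagram chase.

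First I would record the two standard inputs. For an inverse system $\{M_n,\alpha_n\}$ the modules $\invlim{n}{M_n}$ and $\higherlim{n}{1}{M_n}$ are the kernel and cokernel of
\[
\Delta \colon \prod_{n \ge 1} M_n \longrightarrow \prod_{n \ge 1} M_n,
\qquad (x_n)_n \longmapsto \bigl(x_n - \alpha_{n+1}(x_{n+1})\bigr)_n ,
\]
so a short exact sequence of inverse systems that is exact at each level induces a short exact sequence of the two-term complexes $\bigl[\prod_n M_n \xrightarrow{\Delta} \prod_n M_n\bigr]$ (using strictness of the maps), whose long exact homology sequence --- equivalently, the snake lemma --- is the required natural six-term exact sequence. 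Next, a pro-trivial system $\{P_n,\gamma_n\}$ has $\invlim{n}{P_n} = 0$, because the image of $\invlim{n}{P_n}$ in each $P_m$ lies in $\im(\gamma^m_n) = 0$ for a suitable $n \ge m$, and $\higherlim{n}{1}{P_n} = 0$, because for $(y_n)_n \in \prod_n P_n$ the element $x_n := \sum_{k \ge n}\gamma_n^k(y_k)$ is a \emph{finite} sum (as $\gamma_n^k = 0$ for $k$ large) and satisfies $\Delta\bigl((x_n)_n\bigr) = (y_n)_n$, so that $\Delta$ is surjective.

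Then I would split the given pro-exact sequence into the levelwise short exact sequences
\begin{gather*}
0 \to \{\ker(f_n)\} \to \{M_n'\} \to \{\im(f_n)\} \to 0,
\\
0 \to \{\im(f_n)\} \to \{\ker(g_n)\} \to \{\ker(g_n)/\im(f_n)\} \to 0,
\\
0 \to \{\ker(g_n)\} \to \{M_n\} \to \{\im(g_n)\} \to 0,
\\
0 \to \{\im(g_n)\} \to \{M_n''\} \to \{M_n''/\im(g_n)\} \to 0,
\end{gather*}
which use that $g_n\circ f_n = 0$. By the definition of pro-exactness the systems $\{\ker(f_n)\}$, $\{\ker(g_n)/\im(f_n)\}$ and $\{M_n''/\im(g_n)\}$ are pro-trivial, so feeding the first, second and fourth of these sequences into the six-term sequence of the previous paragraph and deleting the pro-trivial terms yields natural isomorphisms $\invlim{n}{M_n'}\cong\invlim{n}{\ker(g_n)}$, $\higherlim{n}{1}{M_n'}\cong\higherlim{n}{1}{\ker(g_n)}$, $\invlim{n}{\im(g_n)}\cong\invlim{n}{M_n''}$ and $\higherlim{n}{1}{\im(g_n)}\cong\higherlim{n}{1}{M_n''}$. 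Substituting these into the six-term sequence of the third short exact sequence gives the asserted exact sequence, with naturality inherited from each ingredient. For the last assertion, a pro-isomorphism $\{f_n\}\colon\{M_n\}\to\{N_n\}$ has pro-trivial $\{\ker(f_n)\}$ and $\{\coker(f_n)\}$, and applying the same recipe to $0\to\{\ker(f_n)\}\to\{M_n\}\to\{\im(f_n)\}\to 0$ and $0\to\{\im(f_n)\}\to\{N_n\}\to\{\coker(f_n)\}\to 0$ gives $\invlim{n}{M_n}\cong\invlim{n}{N_n}$ and $\higherlim{n}{1}{M_n}\cong\higherlim{n}{1}{N_n}$, which one checks are the maps induced by $\{f_n\}$.

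The hard part will be bookkeeping rather than any real idea: checking the finiteness of the telescoping sum used for the vanishing of the first derived inverse limit on pro-trivial systems, together with the index estimates it relies on, and verifying that the chain of identifications in the previous paragraph is compatible with the connecting homomorphism $\delta$, so that the six-term sequence one obtains is the natural one and not merely abstractly exact. Everything else is a routine diagram chase.
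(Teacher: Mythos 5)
Your proof is correct and complete. The paper does not prove this lemma itself but defers to \cite[Section~2]{Lueck(2007)}; the argument you give is exactly the standard elementary one used there --- identify $\invlim{n \ge 1}{M_n}$ and $\higherlim{n \ge 1}{1}{M_n}$ as kernel and cokernel of the shift map $\Delta$ on $\prod_{n}M_n$, run the snake lemma over the levelwise short exact sequences built from kernels and images (which exist because the homomorphisms are strict), and annihilate the pro-trivial pieces, where the surjectivity of $\Delta$ on a pro-trivial system comes from the finite telescoping sum. The only blemish is notational: in that sum the map $P_k \to P_n$ should be written $\gamma^n_k$ in the paper's convention (superscript = target, subscript = source), not $\gamma_n^k$; the mathematics is unaffected.
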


\begin{lemma}\label{lem:pro-exactness_and_exactness}
Fix any commutative Noetherian ring $R$, and any ideal
$I\subseteq R $.  Then for any exact sequence $M' \to M \to M''$ of
finitely generated $R$-modules, the sequence
\[
 \{M'/I^nM'\} \to  \{M/I^nM\} \to  \{M''/I^nM''\} 
\]
of pro-$R$-modules is pro-exact.
\end{lemma}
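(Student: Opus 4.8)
The plan is to unwind the definition of pro-exactness and then invoke the Artin--Rees Lemma. Write $\phi\colon M' \to M$ and $\psi\colon M \to M''$ for the two maps, so that $\im(\phi) = \ker(\psi)$, and let $f_n\colon M'/I^nM' \to M/I^nM$ and $g_n\colon M/I^nM \to M''/I^nM''$ be the induced strict homomorphisms. Since $\psi \circ \phi = 0$ we have $g_n \circ f_n = 0$ for every $n \ge 1$, so by definition of pro-exactness it suffices to prove that the pro-$R$-module $\{\ker(g_n)/\im(f_n)\}$ is pro-trivial.

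The main step is to identify this pro-module explicitly. Set $L := \im(\phi) = \ker(\psi) \subseteq M$ and $N := \im(\psi) \subseteq M''$; then $\psi$ factors as $M \twoheadrightarrow M/L \xrightarrow{\cong} N \hookrightarrow M''$, and $N$ is finitely generated because $M''$ is and $R$ is Noetherian. A direct computation gives $\im(f_n) = (L + I^nM)/I^nM$ and $\ker(g_n) = \psi^{-1}(I^nM'')/I^nM$, hence by the third isomorphism theorem $\ker(g_n)/\im(f_n) \cong \psi^{-1}(I^nM'')/(L + I^nM)$. Transporting through the isomorphism $M/L \xrightarrow{\cong} N$ induced by $\psi$, and using $\psi(I^nM) = I^nN$, one obtains a natural isomorphism
\[
\ker(g_n)/\im(f_n) \;\xrightarrow{\;\cong\;}\; \bigl(N \cap I^nM''\bigr)/I^nN,
\]
under which, for $m \le n$, the transition map of the left-hand pro-module corresponds to the map induced by the inclusion $N \cap I^nM'' \subseteq N \cap I^mM''$ followed by the projection onto $(N \cap I^mM'')/I^mN$. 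Checking this compatibility is routine bookkeeping with the quotient maps of $\{M/I^nM\}$.

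It then remains to see that $\{(N \cap I^nM'')/I^nN\}$ is pro-trivial, and this is exactly where the Artin--Rees Lemma enters: since $R$ is Noetherian, $M''$ is finitely generated and $N \subseteq M''$ is a submodule, there is a constant $c \ge 0$ with $N \cap I^nM'' = I^{n-c}(N \cap I^cM'')$ for all $n \ge c$, and in particular $N \cap I^nM'' \subseteq I^{n-c}N$. Given $m \ge 1$, take $n := m + c \ge m$; then $N \cap I^nM'' \subseteq I^mN$, so the transition map $(N \cap I^nM'')/I^nN \to (N \cap I^mM'')/I^mN$ vanishes. This proves pro-triviality, and hence the lemma. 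The only non-formal ingredient is the Artin--Rees Lemma --- precisely the step that uses the Noetherian and finite-generation hypotheses --- while everything else is diagram chasing with the pro-structure; so I do not expect any real obstacle beyond correctly tracking the transition maps through the identification above.
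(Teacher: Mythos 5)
Your proof is correct and is essentially the standard argument that the paper relies on: the paper itself defers the proof to~\cite[Section~2]{Lueck(2007)}, where the lemma is established in the same way, by identifying $\ker(g_n)/\im(f_n)$ with $(N\cap I^nM'')/I^nN$ for $N=\im(\psi)$ and then applying the Artin--Rees Lemma to kill the transition maps. No discrepancies to report.
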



\typeout{--------------------   Section 3---------------------------------------}

\section{The Formulation of a Completion Theorem}
\label{sec:The_Formulation_of_a_Completion_Theorem}

Consider a proper equivariant $G$-cohomology theory $\calh_?^*$ with
multiplicative structure.  In the sequel $\calh^*$ is the
non-equivariant cohomology theory with multiplicative structure given
by $\calh^*_G$ for $G = \{1\}$. Notice that $\calh^0(\pt)$ is a
commutative ring with unit and $\calh^n_G(X)$ is a
$\calh^0(\pt)$-module. In some future applications $\calh^0(\pt)$ will be just 
$\IZ$.  In the sequel $[Y,X]^G$ denotes the set of $G$-homotopy
classes of $G$-maps $Y \to X$. Notice that evaluation at the unit
element of $G$ induces a bijection $[G,X]^G \xrightarrow{\cong}
\pi_0(X)$. It is compatible with the left $G$-actions, which are on the source
induced by precomposing with right multiplication $r_g \colon G \to G, g' \mapsto g'g$
and on the target by the given left $G$-action on $X$.

So we can represent elements in $G\backslash \pi_0(X)$ by classes
$\overline{x}$ of $G$-maps $x \colon G \to X$, where
$x \colon G \to X$ and $y \colon G \to X$ are equivalent,
if for some $g \in G$ the composite $y \circ r_g$
is $G$-homotopic to $x$.

\begin{definition}[Augmentation ideal]\label{def:augmentation_ideal}
  For any proper $G$-CW-complex $X$ the \emph{augmentation module} $\II_G^n(X)\subseteq
  \calh^n_G(X)$ is defined as the kernel of the map
  \[
\calh^n_G(X) \xrightarrow{\prod_{\overline{x} \in G\backslash \pi_0(X)} \ind_{\{1\}
      \to G} \circ \calh^n_G(x)} \prod_{\overline{x} \in G\backslash \pi_0(X)}
  \calh^n(\pt).
\]
  (The composite above is independent of the choice of $x \in
  \overline{x}$ by $G$-homotopy invariance and Lemma~\ref{lem:calh_G(G/H)_and_calh_H(ast)}.)
  If $n = 0$, the map above is a ring homomorphism and $\II_G(X) := \II^0_G(X)$ is an
  ideal called \emph{the augmentation ideal}.
\end{definition}

Given a $G$-map $f\colon X \to Y$, the induced map $\calh^n_G(f)
\colon \calh^n_G(Y) \to \calh^n_G(X)$ restricts to a map $\II_G^n(Y) \to \II_G^n(X)$.

We will need the following elementary lemma:

\begin{lemma}\label{lem:nilpotence_of_In}
  Let $X$ be a CW-complex of dimension $(n-1)$.  Then any $n$-fold
  product of elements in $\II_G^*(X)$ is zero.
\end{lemma}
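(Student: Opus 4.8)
The plan is to reduce the statement to the skeletal filtration of $X$ together with the long exact sequence of a pair, and to argue that each factor in an $n$-fold product of augmentation classes can be "pushed up" one dimension in a relative cohomology group until the product lands in $\calh^*_G$ of the empty space, which is zero. First I would set $X^{(k)}$ for the $k$-skeleton of $X$, so that $X = X^{(n-1)}$ and $X^{(-1)} = \emptyset$, and consider for each $k$ the inclusion $j_k \colon X^{(k-1)} \hookrightarrow X^{(k)}$ together with the long exact sequence
\[
\calh^{*}_G(X^{(k)}, X^{(k-1)}) \to \calh^*_G(X^{(k)}) \xrightarrow{\calh^*_G(j_k)} \calh^*_G(X^{(k-1)}).
\]
The key observation is that the restriction of any class in $\II^*_G(X^{(k)})$ to $\calh^*_G(X^{(0)})$ is zero: indeed $X^{(0)}$ is a disjoint union of orbits $G/H$ with $H$ isotropy, and the composite defining the augmentation module factors, via Lemma~\ref{lem:calh_G(G/H)_and_calh_H(ast)} and the induction structure, through exactly the detection maps that cut out $\II^*_G$. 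Hence every element of $\II^*_G(X)$ restricted to $\calh^*_G(X^{(0)})$ vanishes, so it lifts to $\calh^*_G(X^{(1)}, X^{(0)})$.

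Next I would run the standard multiplicative argument for the skeletal filtration. Writing $F^k := \ker\bigl(\calh^*_G(X) \to \calh^*_G(X^{(k-1)})\bigr)$, one has $F^0 = \calh^*_G(X)$, $F^1 \supseteq \II^*_G(X)$ by the previous paragraph (applied with $X^{(k)} = X$, since restriction to $X^{(0)}$ factors through restriction to any $X^{(k-1)}$ with $k \ge 1$), and $F^n = \ker\bigl(\calh^*_G(X) \to \calh^*_G(X^{(n-1)})\bigr) = \ker(\id) = 0$ because $X = X^{(n-1)}$. The multiplicative structure, being compatible with the boundary homomorphisms and natural in $G$-maps, gives $F^i \cup F^j \subseteq F^{i+j}$: a class in $F^i$ is the image of a relative class in $\calh^*_G(X, X^{(i-1)})$, a class in $F^j$ is the image of a relative class in $\calh^*_G(X, X^{(j-1)})$, and their relative cup product lives in $\calh^*_G(X, X^{(i-1)} \cup X^{(j-1)}) = \calh^*_G(X, X^{(\max(i,j)-1)})$; but in fact using the diagonal-type pairing $\calh^*_G(X,A) \otimes \calh^*_G(X,B) \to \calh^*_G(X, A \cup B)$ iteratively and the inclusion $X^{(i-1)} \cup X^{(j-1)} = X^{(\max(i,j)-1)}$ one only gets the crude bound $F^i \cup F^j \subseteq F^{\max(i,j)}$, which is not enough — so the honest argument must instead pair relative classes over the product filtration, i.e. represent an element of $\II^*_G(X)$ as living in $\calh^*_G(X, X^{(0)})$ and observe that the $n$-fold relative cup product takes values in $\calh^*_G(X, X^{(0)} \cup X^{(1)} \cup \cdots)$. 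Here the correct bookkeeping is that the cup product of classes relative to $X^{(a)}, X^{(b)}, \dots$ lands relative to the union, and for the increasing filtration one genuinely needs the sharper statement that a class in $\II^*_G(X)$ lifts compatibly through $\calh^*_G(X, X^{(0)}) \to \calh^*_G(X, X^{(1)}) \to \cdots$ is false in general; instead one uses that an $n$-fold product of elements each lifting to $\calh^*_G(X, X^{(0)})$ lands in $\calh^*_G(X, X^{(0)})^{\cup n} \subseteq \calh^*_G(X, \bigcup \text{copies of } X^{(0)})$, which is not obviously the whole of $X$.

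Because of this subtlety, the cleaner route — and the one I would actually write — is the filtration-quotient argument: one shows directly that $\II^*_G(X) \cdot \calh^*_G(X) \subseteq \ker\bigl(\calh^*_G(X) \to \calh^*_G(X^{(0)})\bigr)$ and, more generally, that if $\xi \in \ker\bigl(\calh^*_G(X) \to \calh^*_G(X^{(k-1)})\bigr)$ and $\eta \in \II^*_G(X)$, then $\xi \cup \eta \in \ker\bigl(\calh^*_G(X) \to \calh^*_G(X^{(k)})\bigr)$. This is the genuine heart of the proof: $\xi$ lifts to a class $\widetilde\xi \in \calh^*_G(X, X^{(k-1)})$, and restricting $\eta$ to $\calh^*_G(X^{(k)})$ and then to each $k$-cell $G/H \times D^k$ rel boundary, one uses Lemma~\ref{lem:calh_G(G/H)_and_calh_H(ast)} to see the augmentation class restricts trivially on the $0$-skeleton of each cell, hence on the $(k-1)$-skeleton after the cellular attaching, and the relative cup product $\widetilde\xi \cup (\text{rel lift of }\eta)$ lands in $\calh^*_G(X, X^{(k)})$. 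Iterating $n$ times starting from $\xi = 1 \in \calh^0_G(X) = \ker(\cdot \to \calh^*_G(X^{(-1)}))$ since $X^{(-1)} = \emptyset$, an $n$-fold product of elements of $\II^*_G(X)$ lies in $\ker\bigl(\calh^*_G(X) \to \calh^*_G(X^{(n-1)})\bigr) = \ker\bigl(\calh^*_G(X) \xrightarrow{\id} \calh^*_G(X)\bigr) = 0$. I expect the main obstacle to be precisely the compatibility of the augmentation module with restriction to skeleta — i.e. verifying that restricting an augmentation class to $X^{(k-1)}$, after the cells are glued, still lies in the kernel of the relevant detection map — which is exactly where Lemma~\ref{lem:calh_G(G/H)_and_calh_H(ast)} and the naturality of the induction structure have to be invoked carefully for each equivariant cell.
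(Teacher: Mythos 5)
Your overall strategy (skeletal filtration plus relative cup products) is in the right circle of ideas, but the step that actually makes the product vanish is asserted rather than proved, and the justification you offer for it is incorrect. In your final paragraph you claim that $\widetilde\xi \cup (\text{rel lift of }\eta)$ ``lands in $\calh^*_G(X,X^{(k)})$''. For the pairing $\calh^*_G(X,A)\otimes\calh^*_G(X,B)\to\calh^*_G(X,A\cup B)$ to yield this, you would need $\eta$ to vanish on some subspace $B\subseteq X$ with $X^{(k-1)}\cup B\supseteq X^{(k)}$, and you never exhibit such a $B$. The reason you give --- that the augmentation class ``restricts trivially on the $0$-skeleton of each cell, hence on the $(k-1)$-skeleton'' --- is false on both counts: an element of $\II_G^*(X)$ does \emph{not} restrict to zero on $X^{(k-1)}$ (that is precisely what the induction is trying to establish one degree at a time), and it does not restrict to zero on a cell $G/H\times D^k$ with $H\neq\{1\}$ either, since that restriction is a typically nonzero element of the augmentation ideal $\II(H)\subseteq\calh^0_H(\pt)$; the definition of $\II_G^*$ only kills restrictions to \emph{free} orbits through points. (For the same reason the lemma is really about complexes with free cells, as in its application to $(EG\times_G X)_{(n-1)}$; for $X=G/H$ with $H\neq\{1\}$ and $n=1$ the statement as you read it would be false.) What is true, and what you need, is that $\eta$ vanishes on a \emph{disjoint union of contractible pieces} by homotopy invariance; converting that into a relative class whose product with $\widetilde\xi$ is visibly zero requires either an excision step inside $X^{(k)}$ or a more careful choice of subspaces, neither of which appears in your writeup. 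Your middle paragraph is, as you yourself note, a dead end.

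The paper closes exactly this gap with a two-set decomposition and an induction on $n$ rather than a skeleton-by-skeleton module argument: write $X=Y\cup A$ with $Y$ a closed subset deformation retracting onto $X^{(n-2)}$ and $A$ a disjoint union of closed $(n-1)$-disks (shrunken top cells), so that $Y\cup A=X$. By induction $v_1\cdots v_{n-1}$ vanishes on $Y\simeq X^{(n-2)}$ and lifts to $\calh_G^*(X,Y)$; the last factor $v_n$ vanishes on the disjoint union of disks $A$ and lifts to $\calh_G^*(X,A)$; their relative product lies in $\calh_G^*(X,Y\cup A)=\calh_G^*(X,X)=0$. If you want to salvage your iterated version, the missing ingredient at each stage is precisely such a decomposition (or an excision argument identifying $\calh^*_G(X^{(k)},X^{(k-1)})$ with the cohomology of the disjoint $k$-cells rel boundary).
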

\begin{proof}Write $X=Y\cup A$, where $Y$ and $A$ are closed subsets, $Y$
contains $X^{(n-2)}$ as a homotopy deformation retract, and $A$ is a disjoint
union of $(n{-}1)$-disks. Fix elements $v_1,v_2,\dots,v_n\in
\II_G^*(X)$.  We can assume by induction that $v_1\cdots{}v_{n-1}$
vanishes after restricting to $Y$, and hence that it is the image of
an element $u\in \calh_G^*(X,Y)$.  Also, $v_n$ clearly vanishes after
restricting to $A$, and hence is the image of an element $v\in
\calh_G^*(X,A)$.  The product of $v_1\cdots{}v_{n-1}$ and $v_n$ is the image in $\calh_G^*(X)$ of
the element $uv\in \calh_G^*(X,Y\cup A) = \calh_G^*(X,X) = 0$ and so $v_1 \cdots{} v_n=0$.  
\end{proof}

Now fix a map $f \colon X \to L$ between $G$-$CW$-complexes. Consider
$\calh^*_G(X)$ as a module over the ring $\calh^0_G(L)$.  Consider the composition
\begin{multline*}
  \II_G(L)^n \cdot \calh_G^m(X) \xrightarrow{i} \calh_G^m(X)
  \xrightarrow{\calh_G^m(\pr)} \calh_G^m(EG \times X) \\
  \xrightarrow{\left(\ind_{G \to \{1\}}\right)^{-1}} \calh^m(EG
  \times_G X) \xrightarrow{\calh^m(j)} \calh^m\left((EG \times_G
    X)_{(n-1)}\right),
\end{multline*}
where $i$ and $j$ denote the inclusions, $\pr$ the projection and $(EG
\times_G X)_{(n-1)}$ is the $(n-1)$-skeleton of $EG \times_G X$.  This
composite is zero because of Lemma~\ref{lem:nilpotence_of_In} since
its image is contained in
$\II^n\left((EG \times_G X)_{(n-1)}\right)$. Thus we obtain a homomorphism of
pro-$\calh^0(\pt)$-modules
\begin{multline}
\lambda^m_G(f \colon X \to L) \colon \left\{\calh^m_G(X)/\II_G(L)^n \cdot \calh^m_G(X)\right\}_{n \ge 1}
\\ \to \left\{\calh^m_G\left((EG \times_G X)_{(n-1)}\right)\right\}_{n \ge 1}.
\label{map_lambda_of_pro-modules}
\end{multline}
We will sometimes write $\lambda^m_G$ or $\lambda^m_G(X)$ instead of
$\lambda^m_G(f \colon X \to L)$ if the map $f \colon X \to L$ is clear
from the context. Notice that the target of $\lambda^m_G(f \colon X \to
L)$ depends only on $X$ but not on the map $f \colon X \to L$, whereas
the source does depend on $f$.

\begin{problem}[Completion Problem]\label{pro:formulation_of_the_Completion_Theorem}
  Under which conditions on ${\calh^*_?}$ and $L$ is the map of
  pro-$\calh^0(\pt)$-modules $\lambda^m_G(f \colon X \to L)$ defined
  in~\eqref{map_lambda_of_pro-modules} an isomorphism of
  pro-$\calh^0(\pt)$-modules?
\end{problem}

\begin{remark}[Consequences of the Completion Theorem]\label{rem:Consequences_of_the_Completion_Theorem}
Suppose that the map of pro-$\calh^0(\pt)$-modules
$\lambda^m_G(X)$ defined in~\eqref{map_lambda_of_pro-modules} is an
isomorphism of pro-$\calh^0(\pt)$-modules.  Obviously the
pro-module $\{\calh^m_G(X)/\II_G(L)^n \cdot \calh^m_G(X)\}_{n \ge 1}$
satisfies the Mittag-Leffler condition since all structure maps are
surjective. This implies that its ${\lim}^1$-term vanishes.  We
conclude from Lemma~\ref{lem:pro-exactness_and_limits}
\begin{eqnarray*}
\higherlim{n \to \infty}{1}{\calh^m\left((EG \times_G X)_{(n-1)}\right)} & = & 0;
\\
\invlim{n \to \infty}{\calh^m\left((EG \times_G X)_{(n-1)}\right)} & \cong &
\invlim{n \to \infty}{\calh^m_G(X)/\II_G(L)^n \cdot \calh^m_G(X)}.
\end{eqnarray*}
Milnor's exact sequence
\begin{multline*}
  0 \to \higherlim{n \to \infty}{1}{\calh^{m-1}\left((EG \times_G
      X)_{(n-1)}\right)} \to \calh^m(EG \times_G X)
  \\
  \to \invlim{n \to \infty}{\calh^m\left((EG \times_G
      X)_{(n-1)}\right)} \to 0
\end{multline*}
implies that we obtain an isomorphism
\[
\calh^m(EG \times_G X) \cong \invlim{n \to
  \infty}{\calh^m_G(X)/\II_G(L)^n \cdot \calh^m_G(X)}.
\]
\end{remark}

\begin{remark}[Taking $L = \underline{E}G$]
\label{rem:taking_L_to_be_underlineEG}
The \emph{classifying space $\underline{E}G$ for proper $G$-actions}
is a proper $G$-$CW$-complex such that the $H$-fixed point set is
contractible for every finite subgroup $H \subseteq G$. It has the
universal property that for every proper $G$-$CW$-complex $X$ there is
up to $G$-homotopy precisely one $G$-map $f \colon X \to
\underline{E}G$. Recall that a $G$-$CW$-complex is proper if and only
if all its isotropy groups are finite and is finite if and only if it
is cocompact.  There is a cocompact $G$-$CW$-model for the classifying
space $\underline{E}G$ for proper $G$-actions for instance if $G$ is
word-hyperbolic in the sense of Gromov, if $G$ is a cocompact subgroup
of a Lie group with finitely many path components, if $G$ is a
finitely generated one-relator group, if $G$ is an arithmetic group, a
mapping class group of a compact surface or the group of outer
automorphisms of a finitely generated free group. For more information
about $\underline{E}G$ we refer for instance to~\cite{Baum-Connes-Higson(1994)}
and~\cite{Lueck(2005s)}.

Suppose that there is a finite model for the classifying space of
proper $G$-actions $\underline{E}G$. Then we can apply this to $\id
\colon \underline{E}G \to \underline{E}G$ and obtain an isomorphism
\[
\calh^m(BG) \cong \invlim{n \to
  \infty}{\calh^m_G(\underline{E}G)/\II_G(\underline{E}G)^n \cdot \calh^m_G(\underline{E}G)}.
\]
\end{remark}

\begin{remark}[The free case]\label{rem:The_torsionfree_case} 
The statement of the Completion Theorem as stated in
Problem~\ref{pro:formulation_of_the_Completion_Theorem}
is always true for trivial reasons if $X$ is a free
finite $G$-$CW$-complex. Then induction induces an isomorphism
\[
\ind_{G \to \{1\}} \colon
\calh^m(G\backslash X) \xrightarrow{\cong} \calh^m_G(X).
\]
Since $\II(G\backslash X)^n = 0$ for large enough $n$ by Lemma~\ref{lem:nilpotence_of_In},
the canonical map
\[
\{\calh^m(G\backslash X)\}_{n \ge 1} \xrightarrow{\cong}
\{\calh^m(G\backslash X)/\II_G(L)^n \cdot \calh^m(G\backslash X)\}_{n \ge 1}
\]
with the constant pro-$\calh^0(\pt)$-module as source is an isomorphism.
Hence the source of $\lambda^m_G(f \colon G \to X)$ can be identified with
constant pro-$\calh^0(\pt)$-module $\{\calh^m(G\backslash X)\}_{n \ge 1}$.

The projection $EG \times_G X \to G\backslash X$ is a
homotopy equivalence and induces an isomorphism
pro-$\IZ$-modules
\[
\left\{\calh^m\left((G\backslash X)_{(n-1)}\right)\right\}_{n \ge 1}  \xrightarrow{\cong}
\left\{\calh^m\left(\left(EG \times_G X\right)_{(n-1)}\right)\right\}_{n \ge 1}.
\]
Since $G\backslash X$ is finite dimensional, the canonical map
\[
\{\calh^m(G\backslash X)\}_{n \ge 1}
 \xrightarrow{\cong}
\left\{\calh^m\left((G\backslash X)_{(n-1)}\right)\right\}_{n \ge 1}
\]
is an isomorphism of pro-$\IZ$-modules. Hence also the target
of $\lambda^m_G(f \colon G \to X)$ can be identified with
constant pro-$\calh^0(\pt)$-module $\{\calh^m(G\backslash X)\}_{n \ge 1}$.
One easily checks that under these identifications
$\lambda^m_G(f \colon G \to X)$ is the identity.

Hence the Completion Theorem is only interesting in the case, where $G$ contains torsion.
\end{remark}


\typeout{--------------------   Section 4  ---------------------------------------}

\section{A Strategy for a Proof of a Completion Theorem}
\label{sec:A_Strategy_for_a_Proof_of_a_Completion_Theorem}

\begin{theorem}{\bf (Strategy for the proof of Theorem~\ref{the:Segal_Conjecture_for_infinite_groups}).}%
\label{the:strategy_of_proof_of_the_completion_theorem}
 Let $\calh^?_*$ be an equivariant cohomology theory with values in $R$-modules with a
 multiplicative structure.  Let $L$ be a proper $G$-$CW$-complex. Suppose that
 the following conditions are satisfied, where $\calf(L)$ is the family of subgroups
 of $G$ given by $\{H \subseteq G \mid L^H \not= \emptyset\}$.
\begin{enumerate}

\item\label{the:strategy_of_proof_of_the_completion_theorem:Notherian}
  The ring $\calh^0(\pt)$ is Noetherian;

\item\label{the:strategy_of_proof_of_the_completion_theorem:fin._gen.}
  For any $H \in \calf(L)$ and $m \in \IZ$ the
  $\calh^0(\pt)$-module $\calh^m_H(\pt)$ is finitely generated;

\item\label{the:strategy_of_proof_of_the_completion_theorem:ideals}
  Let $H \in \calf(L)$, let $\calp \subseteq
  \calh^0_H(\pt)$ be a prime ideal, and let $f \colon G/H \to L$ be any
  $G$-map. Then the augmentation ideal
  \[
\II(H) = \ker \left(\calh^0_H(\pt) \xrightarrow{\calh^0_H(\pr)} \calh^0_H(H)
    \xrightarrow{\ind_{\{1\} \to H}} \calh^0(\pt)\right)
\]
  is contained in $\calp$
  if $\calh^0_G(L) \xrightarrow{\calh^0_G(f)} \calh^0_G(G/H) \xrightarrow{\ind_{G \to \{1\}}}
  \calh^0_H(\pt)$ maps $\II_G(L)$ into $\calp$;

\item\label{the:strategy_of_proof_of_the_completion_theorem:finite_groups}
  The Completion Theorem is true for every finite group $H$ with $H \in \calf(L)$
  in the case, where $X = L = \pt$ and $f = \id \colon \pt \to \pt$. In other words, for every
  finite group $H$ with $L^H \not= \emptyset$ the map of pro-$\calh^0(\pt)$-modules
\begin{eqnarray*}
 \lambda^m_H(\pt) \colon \left\{\calh^m_H(\pt)/\II(H)^n\right\}_{n \ge 1} & \to &
 \left\{\calh^m\left((BH)_{(n-1)}\right)\right\}_{n \ge 1}
\end{eqnarray*}
defined in~\eqref{map_lambda_of_pro-modules} is an isomorphism of
pro-$\calh^0(\pt)$-modules.

\end{enumerate}

Then, under the conditions above,  the Completion Theorem is true for $\calh^?_*$ and every $G$-map
$f \colon X \to L$ from a finite proper $G$-$CW$-complex $X$ to
$L$, i.e.,  the map of
pro-$\calh^0(\pt)$-modules
\begin{eqnarray*}
 \lambda^m_G(X) \colon \left\{\calh^m_G(X)/\II_G(L)^n \cdot \calh^m_G(X)\right\}_{n \ge 1} & \to &
\left\{\calh^m_G\left((EG \times_G X)_{(n-1)}\right)\right\}_{n \ge 1}
\end{eqnarray*}
defined in~\eqref{map_lambda_of_pro-modules} is an isomorphism of
pro-$\calh^0(\pt)$-modules.
\end{theorem}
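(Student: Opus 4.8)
The plan is to reduce to the case $X = G/H$ for a finite subgroup $H$, and then to build up an arbitrary finite proper $G$-$CW$-complex $X$ from its equivariant cells by a Mayer--Vietoris induction. Note throughout that a $G$-map $X \to L$ forces every isotropy group of $X$ to lie in $\calf(L)$, so conditions~\eqref{the:strategy_of_proof_of_the_completion_theorem:fin._gen.},~\eqref{the:strategy_of_proof_of_the_completion_theorem:ideals} and~\eqref{the:strategy_of_proof_of_the_completion_theorem:finite_groups} are applicable to all the groups that occur.

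\emph{The orbit case.} For $X = G/H$ with $H \in \calf(L)$ finite and $f \colon G/H \to L$ a $G$-map, I would use the induction structure to identify $\calh^m_G(G/H) = \calh^m_H(\pt)$ and $EG \times_G G/H$ with a model for $BH$. A diagram chase with the axioms of the induction and multiplicative structures then shows that under these identifications $\lambda^m_G(f \colon G/H \to L)$ becomes the map $\lambda^m_H(\pt)$ of condition~\eqref{the:strategy_of_proof_of_the_completion_theorem:finite_groups}, except that its source is built from the ideal $J \subseteq \calh^0_H(\pt)$, the image of $\II_G(L)$ under $\calh^0_G(L) \xrightarrow{\calh^0_G(f)} \calh^0_G(G/H) \xrightarrow{\ind_H^G} \calh^0_H(\pt)$, instead of from $\II(H)$. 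One has $J \subseteq \II(H)$: the $G$-map $x \colon G \to L$, $g \mapsto f(gH)$, factors as $G \to G/H \xrightarrow{f} L$, and Lemma~\ref{lem:calh_G(G/H)_and_calh_H(ast)} with functoriality of the induction structure identifies the corresponding component $\ind_{\{1\}\to G}\circ\calh^0_G(x)$ of the augmentation of $\calh^0_G(L)$ with the composite of $\calh^0_G(L) \to \calh^0_H(\pt)$ and the augmentation $\calh^0_H(\pt) \to \calh^0(\pt)$, so that $\II_G(L) \mapsto 0$ under the latter forces $J \subseteq \II(H)$. Conversely, condition~\eqref{the:strategy_of_proof_of_the_completion_theorem:ideals} says exactly that $\II(H)$ lies in every prime of $\calh^0_H(\pt)$ containing $J$, hence $\II(H) \subseteq \sqrt{J}$; since $\calh^0_H(\pt)$ is Noetherian by~\eqref{the:strategy_of_proof_of_the_completion_theorem:Notherian} and~\eqref{the:strategy_of_proof_of_the_completion_theorem:fin._gen.}, this gives $\II(H)^k \subseteq J \subseteq \II(H)$ for some $k$, whence $\II(H)^{kn}\cdot M \subseteq J^n\cdot M \subseteq \II(H)^n\cdot M$ for every module $M$. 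So the two filtrations are cofinal and $\{\calh^m_H(\pt)/J^n\cdot(-)\}$, $\{\calh^m_H(\pt)/\II(H)^n\cdot(-)\}$ are pro-isomorphic, compatibly with $\lambda$; hence $\lambda^m_G(f \colon G/H \to L)$ is a pro-isomorphism by condition~\eqref{the:strategy_of_proof_of_the_completion_theorem:finite_groups}.

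\emph{The induction.} I would then prove the theorem for general $X$ by induction on the pair $(\dim X, c)$, with $c$ the number of top-dimensional equivariant cells, ordered lexicographically. If $\dim X \le 0$, then $X$ is a finite disjoint union of orbits, and both pro-systems and $\lambda$ split as the corresponding finite products, so the claim follows from the orbit case. If $d := \dim X \ge 1$, write $X = X' \cup_\phi (G \times_H D^d)$ with $\phi \colon G \times_H S^{d-1} \to X'$ a cellular $G$-map, giving a $G$-pushout. Each of $X'$, $G \times_H D^d \simeq_G G/H$ and $G \times_H S^{d-1}$ is strictly smaller in the ordering (or an orbit up to $G$-homotopy), so $\lambda$ is a pro-isomorphism for all three by the induction hypothesis and $G$-homotopy invariance. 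Applying $\calh^*_G(-)$ and $\calh^*\bigl((EG \times_G -)_{(n-1)}\bigr)$ to the pushout produces two Mayer--Vietoris sequences related by $\lambda^*$. The one built from $\calh^*\bigl((EG \times_G -)_{(n-1)}\bigr)$ is honestly exact for each $n$ (it is the Mayer--Vietoris sequence of the induced pushout of $(n-1)$-skeleta), hence pro-exact; the one built from $\{\calh^*_G(-)/\II_G(L)^n\cdot(-)\}$ is pro-exact by Lemma~\ref{lem:pro-exactness_and_exactness}, once one checks, using~\eqref{the:strategy_of_proof_of_the_completion_theorem:Notherian},~\eqref{the:strategy_of_proof_of_the_completion_theorem:fin._gen.} and induction over cells, that every $\calh^*_G(Y)$ occurring is finitely generated over $\calh^0(\pt)$ and that the relevant $\calh^0_G(L)$-module structures factor through a Noetherian ring --- for instance the image of $\calh^0_G(L)$ in the product of the endomorphism rings of these finitely many finitely generated $\calh^0(\pt)$-modules, which is itself finitely generated over $\calh^0(\pt)$ --- so that $\II_G(L)^n\cdot(-)$ becomes an $I$-adic filtration over a Noetherian ring. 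The five-lemma for pro-$\calh^0(\pt)$-modules (valid since pro-triviality passes to subobjects, quotients and extensions) then forces $\lambda^m_G(X)$ to be a pro-isomorphism, completing the induction.

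\emph{Expected main obstacle.} The conceptual heart, and the only place where condition~\eqref{the:strategy_of_proof_of_the_completion_theorem:ideals} is used in an essential way, is the commutative-algebra comparison in the orbit case: showing that the $\II_G(L)$-adic and $\II(H)$-adic topologies on $\calh^*_H(\pt)$ agree, so that the $\II_G(L)$-adic completion can ``see'' the finite-group input of condition~\eqref{the:strategy_of_proof_of_the_completion_theorem:finite_groups}. The inclusion $J \subseteq \II(H)$ is formal, but the reverse cofinality is exactly~\eqref{the:strategy_of_proof_of_the_completion_theorem:ideals} together with Noetherianness; everything else --- the axiom-level identification of $\lambda^m_G(G/H)$ with $\lambda^m_H(\pt)$ and the finite-generation bookkeeping needed to invoke Lemma~\ref{lem:pro-exactness_and_exactness} --- is routine but must be carried out with care.
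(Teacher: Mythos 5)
Your proposal is correct and follows essentially the same route as the paper: the orbit case is handled by comparing the $\II_G(L)$-adic and $\II(H)$-adic filtrations on $\calh^m_H(\pt)$ via condition~\eqref{the:strategy_of_proof_of_the_completion_theorem:ideals} and Noetherianness (the paper phrases $\II(H)\subseteq\sqrt{J}$ as nilpotence of $\II(H)/J_f$ in the quotient ring), and the general case by the same induction over dimension and top-dimensional cells with a Mayer--Vietoris comparison, Lemma~\ref{lem:pro-exactness_and_exactness}, and the pro-five-lemma. The only cosmetic differences are in the bookkeeping: the paper uses the Noetherian ring $\calh^0_G(X)$ rather than your image-in-endomorphism-rings construction, and replaces $\bigl(EG\times_G Z\bigr)_{(n-1)}$ by the pro-isomorphic system $EG_{(n-1)}\times_G Z$ to get honest pushouts.
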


\begin{proof}
  We first prove the Completion Theorem for $X = G/H$, i.e., for any a $G$-map
  $f \colon G/H \to L$. Obviously $H$ belongs to $\calf(L)$. The following diagram of pro-modules commutes
  \[
\xymatrix@!C=19em{\{\calh^m_G(G/H)/\II_G(L)^n \cdot \calh^m_G(G/H)\}_{n \ge 1}
    \ar[d]_{\{\ind_{H \to G}\}_{n \ge 1}} \ar[r]^-{\lambda^m_G(f \colon
      G/H \to L)} & \{\calh^m\left((EG \times_G G/H)_{(n-1)}\right)\}_{n \ge 1}
    \\
    \{\calh^m_H(\pt)/\II_G(L)^n \cdot \calh^m_H(\pt) \ar[d]_{\pr}\}_{n \ge 1} &
    \\
    \{\calh^m_H(\pt)/\II(H)^n \cdot \calh^m_H(\pt)\}_{n \ge 1}
    \ar[r]_-{\lambda^m_H(\id \colon \pt \to \pt)} &
    \{\calh^m\left((BH)_{(n-1)}\right)\}_{n \ge 1} \ar[uu]_{\{\calh^m_G(\pr)\}_{n \ge 1}} }
 \]
  where $\pr$ denotes the obvious projection.  The lower horizontal
  arrow is an isomorphism of pro-modules by
  condition~\eqref{the:strategy_of_proof_of_the_completion_theorem:finite_groups}.  The
  right vertical arrow and the upper left vertical arrow are
  obviously isomorphisms of pro-modules. 
  Hence the upper horizontal arrow is an isomorphism of pro-modules if
  we can show that the lower left vertical arrow is an isomorphism of
  pro-modules.

  Let $I_f$ be the image of $\II_G(L)$ under the composite of ring homomorphisms
  \[
\calh^0_G(L) \xrightarrow{\calh^0_G(f)} \calh^0_G(G/H)
  \xrightarrow{\ind_{H \to G}} \calh^0_H(\pt)
\]
  Let $J_f$ be the ideal
  in $\calh^0_H(\pt)$ generated by $I_f$.  Obviously $I_f \subseteq J_f \subseteq \II(H)$.
  Then the left lower vertical arrow is the composite
  \begin{multline*}
\calh^m_H(\pt)/\II_G(L)^n \cdot \calh^m_H(\pt) \to
  \calh^m_H(\pt)/(J_f)^n \cdot \calh^m_H(\pt) 
  \\
   \to
  \calh^m_H(\pt)/\II(H)^n \cdot \calh^m_H(\pt),
\end{multline*}
  where the first
  map is already levelwise an isomorphisms and in particular an
  isomorphism of pro-modules. In order to show that the second map is
  an isomorphism of pro-modules, it remains to show that
  $\II(H)^k \subseteq J_f$ for an appropriate integer $k \ge 1$.
  Equivalently, we want to show that the ideal $\II(H)/J_f$ of
  the quotient ring $\calh^0_H(\pt)/J_f$ is nilpotent. Since
  $\calh^0_H(\pt)$ is Noetherian by
  conditions~\eqref{the:strategy_of_proof_of_the_completion_theorem:Notherian}
  and~\eqref{the:strategy_of_proof_of_the_completion_theorem:fin._gen.}, the ideal
  $\II(H)/J_f$ is finitely generated. Hence it suffices to show
  that $\II(H)/J_f$ is contained in the nilradical, i.e.,
  the ideal consisting of all nilpotent elements, of
  $\calh^0_H(\pt)/J_f$.  The nilradical agrees with the intersection of all the
  prime ideals of $\calh^0_H(\pt)/J_f$
  by~\cite[Proposition~1.8]{Atiyah-McDonald(1969)}.  The preimage of a prime ideal in
  $\calh^0_H(\pt)/J_f$ under the projection $\calh^0_H(\pt) \to \calh^0_H(\pt)/J_f$
  is again a prime ideal.  Hence it remains to show
  that any prime ideal of $\calh^0_H(\pt)$ which contains $I_f$ also
  contains $\II(H)$. But this is guaranteed by
  condition~\eqref{the:strategy_of_proof_of_the_completion_theorem:ideals}. This
  finishes the proof in the case $X = G/H$.

  The general case of a $G$-map $f \colon X \to L$ from a finite
  $G$-$CW$-complex $X$ to a $G$-$CW$-complex $L$
  is done by induction over the dimension $r$ of $X$ and
  subinduction over the number of top-dimensional equivariant cells.
  For the induction step we write $X$ as a $G$-pushout
  \[
  \xycomsquareminus{G/H \times S^{r-1}}{q}{Y}{j}{k}{G/H \times D^r}{Q}{X}
  \]
  In the sequel we equip $G/H \times S^{r-1}$, $Y$ and $G/H \times
  D^r$ with the maps to $L$ given by the composite of $f \colon X \to
  L$ with $k \circ q$, $k$ and $Q$. The long exact Mayer-Vietoris
  sequence of the $G$-pushout above is a long exact sequence of
  $\calh^0_G(L)$-modules and looks like
\begin{multline*}
  \ldots \to \calh^{m-1}(G/H \times   D^r) \oplus \calh^{m+1}_G(Y) \to
  \calh^{m-1}_G(G/H \times S^{r-1}) 
  \to \calh^m_G(X)
  \\
  \to \calh^m_G(G/H \times   D^r) \oplus \calh^m_G(Y) \to
   \calh^m_G(G/H \times S^{r-1})
  \to \ldots.
\end{multline*}
Condition~\eqref{the:strategy_of_proof_of_the_completion_theorem:fin._gen.}
implies that $\calh^m_G(G/H)$ and $\calh^m_G(G/H \times D^r )$
are finitely generated as $\calh^0(\pt)$-modules.  Since $\calh^0(\pt)$
is Noetherian by condition~\eqref{the:strategy_of_proof_of_the_completion_theorem:Notherian},
the $\calh^0(\pt)$-module
$\calh^m_G(X)$ is finitely generated provided that the
$\calh^0(\pt)$-module $\calh^m_G(Y)$ is finitely generated. Thus we
can show inductively that the $\calh^0(\pt)$-module $\calh^m_G(X)$ is
finitely generated for every $m \in \IZ$.  In particular the ring
$\calh^0_G(X)$ is Noetherian. Let $J \subseteq \calh^0_G(X)$ be the
ideal generated by the image of $\II_G(L)$ under the ring
homomorphism $\calh^0_G(L) \to \calh^0_G(X)$. Then for every
$\calh^0_G(X)$-module the obvious map $\{M/\II_G(L)^n \cdot M\}_{n\ge1}
\to \{M/J^n \cdot M\}_{n \ge 1}$ is levelwise an isomorphism and in
particular an isomorphism of $\calh^0_G(X)$-modules. We conclude from
Lemma~\ref{lem:pro-exactness_and_exactness} that the following
sequence of pro-$\calh^0(\pt)$-modules is exact, where $M/I$ stands
for $M/I \cdot M$.
\begin{multline}
  \ldots \to   \{\calh^{m-1}(G/H \times D^r)/\II_G(L)\}_{n\ge 1} \oplus
  \{\calh^{m-1}_G(Y)/\II_G(L)\}_{n\ge 1} 
  \\
  \to \{\calh^{m-1}_G(G/H \times S^{r-1})/\II_G(L)\}_{n\ge 1}
  \\
  \to \{\calh^m_G(X)/\II_G(L)\}_{n\ge 1} 
  \to  \{\calh^m_G(G/H \times D^r)/\II_G(L)\}_{n\ge 1} \oplus
  \{\calh^m_G(Y)/\II_G(L)\}_{n\ge 1}
  \\
  \to \{\calh^m_G(G/H \times S^{r-1})/\II_G(L)\}_{n\ge 1}
  \to \ldots
\label{MV-sequence_I-adic_for_calh_G}
\end{multline}

Applying $EG_{(n-1)} \times_G -$ to the $G$-pushout above yields a
pushout and thus a long exact Mayer-Vietoris sequence
\begin{multline*}
  \ldots \to \calh^{m-1}\left(EG_{(n-1)} \times_G \left(G/H \times
      D^r\right)\right) \oplus \calh^{m-1}_G\left(EG_{(n-1)} \times_G     Y\right)
  \\
   \to \calh^{m-1}_G\left(EG_{(n-1)} \times_G \left(G/H \times S^{r-1}\right)\right)
  \\
  \to  \calh^m_G\left(EG_{(n-1)} \times_G X\right)
  \\
  \to  \calh^m_G\left(EG_{(n-1)} \times_G \left(G/H \times
       D^r\right)\right) \oplus \calh^m_G\left(EG_{(n-1)} \times_G Y\right)
  \\
  \to \calh^m_G\left(EG_{(n-1)} \times_G \left(G/H \times S^{r-1}\right)\right)
  \to \ldots
\end{multline*}

The obvious map 
\[
\left\{\calh^m_G\left(EG_{(n-1)} \times_G Z\right)\right\}_{n \ge 1} \xrightarrow{\cong}
\bigl\{\calh^m_G\bigl(\left(EG\times_G Z\right)_{(n-1)}\bigr)\bigr\}_{n \ge 1}
\]
is an isomorphism of pro-$\calh^0(\pt)$-modules for any finite dimensional
$G$-$CW$-complex $Z$. Hence we obtain a long exact sequence of
pro-$\calh^0(\pt)$-modules
\begin{multline}
  \ldots  \to \bigl\{\calh^{m-1}_G\bigl(\bigl(EG \times_G \bigl(G/H \times
          D^r\bigr)\bigr)_{(n-1)} \bigr)\bigr\}_{n \ge 1} \oplus
  \bigl\{\calh^m_G\bigl(\bigl(EG \times_G
        Y\bigr)_{(n-1)}\bigr)\bigr\}_{n \ge 1}
  \\
  \to \bigl\{\calh^{m-1}_G\bigl(\bigl(EG \times_G \bigl(G/H \times
          S^{r-1}\bigr)\bigr)_{(n-1)}\bigr)\bigr\}_{n \ge 1}
  \\
  \to   \bigl\{\calh^m_G\bigl(\bigl(EG \times_G
        X\bigr)_{(n-1)}\bigr)\bigr\}_{n \ge 1} 
  \\
  \to \bigl\{\calh^m_G\bigl(\bigl(EG \times_G \bigl(G/H \times
          D^r\bigr)\bigr)_{(n-1)} \bigr)\bigr\}_{n \ge 1} \oplus
  \bigl\{\calh^m_G\bigl(\bigl(EG \times_G
        Y\bigr)_{(n-1)}\bigr)\bigr\}_{n \ge 1}
  \\
  \to \bigl\{\calh^m_G\bigl(\bigl(EG \times_G \bigl(G/H \times
          S^{r-1}\bigr)\bigr)_{(n-1)}\bigr)\bigr\}_{n \ge 1}
   \to \ldots
\label{MV-sequence_for_calh((EG_times_X)_(n-1))}
\end{multline}

Now the various maps $\lambda^m_G$ induce a map from the long exact
sequence of pro-$\calh^0(\pt)$-modules~\eqref{MV-sequence_I-adic_for_calh_G} to the long exact sequence
of pro-$\calh^0(\pt)$-modules~\eqref{MV-sequence_for_calh((EG_times_X)_(n-1))}.
The maps for $G/H \times S^{r-1}$, $G/H \times D^r$ and $Y$ are isomorphisms of
pro-$\calh^0(\pt)$-modules by induction hypothesis and by $G$-homotopy
invariance applied to the $G$-homotopy equivalence $G/H \times D^r \to G/H$.
By the Five-Lemma for maps of pro-modules the map
\begin{eqnarray*}
 \lambda^m_G(X) \colon \{\calh^m_G(X)/\II_G(L)^n \cdot \calh^m_G(X)\}_{n \ge 1} & \to &
\{\calh^m_G\left((EG \times_G X)_{(n-1)}\right)\}_{n \ge 1}
\end{eqnarray*}
is an isomorphism of pro-$\calh^0(\pt)$-modules. This finishes the
proof of Theorem~\ref{the:strategy_of_proof_of_the_completion_theorem}.
\end{proof}

The next lemma will be needed to check  
condition~\eqref{the:strategy_of_proof_of_the_completion_theorem:ideals} appearing in
Theorem~\ref{the:strategy_of_proof_of_the_completion_theorem}.

Given an G-cohomology theory $\calh^*_G$.
there is an equivariant version of the Atiyah-Hirzebruch spectral sequence
of $\calh^0(\pt)$-modules
which converges to $\calh^{p+q}(L)$ in the usual sense
provided that $L$  is finite dimensional, and whose $E_2$-term is
\[
E^{p,q}_2 :=  H^p_G(L;\calh^q_G(G/?)),
\]
where $H^p_G(X;\calh^q_G(G/?))$ is the \emph{Bredon cohomology} of
$L$ with coefficients in the $\IZ\Or(G)$-module
sending $G/H$ to $\calh^q_G(G/H)$. If $\calh^*_G$ comes with a
multiplicative structure, then this spectral sequence comes with a multiplicative structure.

\begin{lemma}\label{lem:edge_argument}
Suppose that $L$ is a $l$-dimensional proper $G$-$CW$-complex
for some positive integer $l$. Suppose that for $r = 2,3, \ldots, l$
the differential appearing  in the Atiyah-Hirzebruch spectral sequence
for $L$ and $\calh^*_G$
\[
d^{0,0}_r \colon E^{0,0}_r \to E^{r,1-r}_r
\]
vanishes rationally.

\begin{enumerate}
\item\label{lem:edge_argument:xk_in_the_image_of_edge}
Then we can find  for a given $x \in H^0_G(L;\calh^0_G(G/?))$
a positive integer $k$ such that $x^k$ is contained in the image of the
edge homomorphism
\[
\edge^{0,0} \colon \calh^0_G(L) \to H^0_G(L;\calh^0_G(G/?));
\]

\item\label{lem:edge_argument:improving_the_conditions}
  Let $H \in \calf(L)$, let $\calp \subseteq
  \calh^0_H(\pt)$ be a prime ideal and let $f \colon G/H \to L$ be any
  $G$-map. Suppose that the augmentation ideal
  \[
\II(H) = \ker \left(\calh^0_H(\pt) \xrightarrow{ \calh^0_H(\pr)} \calh^0_H(H)
    \xrightarrow{\ind_{\{1\}\to H}} \calh^0(\pt)\right)
\]
  is contained in $\calp$
  if $\calp$ contains the image of the structure map for $H$ of the inverse limit
  over the orbit category $\Or(G;\calf(L))$  associated to the family $\calf(L)$
  \[
\phi_H \colon \invlim{G/K \in \Or(G;\calf(L))}{\II(K)}  \to \II(H).
\]
  Then condition~\eqref{the:strategy_of_proof_of_the_completion_theorem:ideals}
  appearing in Theorem~\ref{the:strategy_of_proof_of_the_completion_theorem}
  is satisfied for $H$, $\calp$ and $f$.
\end{enumerate}
\end{lemma}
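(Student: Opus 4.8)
The plan is to prove the two parts in turn, the second relying on the first. For part~\eqref{lem:edge_argument:xk_in_the_image_of_edge} I work inside the multiplicative Atiyah--Hirzebruch spectral sequence for $L$ and $\calh^*_G$. Since $L$ is a $G$-$CW$-complex, $E_2^{p,q}=H^p_G(L;\calh^q_G(G/?))$ vanishes for $p<0$, so the corner $(0,0)$ receives no differential; hence $E_\infty^{0,0}$ is a subgroup of $E_2^{0,0}$ and coincides with the image of $\edge^{0,0}$. As $L$ is $l$-dimensional, $E_2^{p,q}=0$ for $p>l$, so $d_r^{0,0}$ is automatically zero for $r>l$, and therefore $E_\infty^{0,0}=E_{l+1}^{0,0}$ is exactly the set of classes that survive $d_2^{0,0},\dots,d_l^{0,0}$. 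Now fix $x\in E_2^{0,0}$ and build the exponent inductively: put $k_1:=1$; having arranged that $x^{k_{r-1}}$ survives to the $E_r$-page, note that $d_r^{0,0}(x^{k_{r-1}})$ is $\IZ$-torsion, since $d_r^{0,0}$ vanishes rationally, hence is annihilated by some positive integer $N_r$, and the Leibniz rule gives
\[
d_r^{0,0}\bigl(x^{k_{r-1}N_r}\bigr)=N_r\,x^{k_{r-1}(N_r-1)}\cdot d_r^{0,0}\bigl(x^{k_{r-1}}\bigr)=0,
\]
so $x^{k_r}$ survives to the $E_{r+1}$-page, where $k_r:=k_{r-1}N_r$. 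Running $r$ from $2$ to $l$ yields $x^{k}\in E_{l+1}^{0,0}=E_\infty^{0,0}=\im(\edge^{0,0})$ with $k:=k_l$.

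For part~\eqref{lem:edge_argument:improving_the_conditions}, I aim to deduce condition~\eqref{the:strategy_of_proof_of_the_completion_theorem:ideals} from the implication ``$\im(\phi_H)\subseteq\calp\Rightarrow\II(H)\subseteq\calp$'' that is part of its hypothesis. So it is enough to show that, whenever the composite $g_f\colon\calh^0_G(L)\xrightarrow{\calh^0_G(f)}\calh^0_G(G/H)\xrightarrow{\ind_{H\to G}}\calh^0_H(\pt)$ of condition~\eqref{the:strategy_of_proof_of_the_completion_theorem:ideals} maps $\II_G(L)$ into $\calp$, one has $\im(\phi_H)\subseteq\calp$; and since $\calp$ is prime, it suffices to find, for each $y$ in the ideal $\invlim{G/K \in \Or(G;\calf(L))}{\II(K)}$ of the ring $\invlim{G/K \in \Or(G;\calf(L))}{\calh^0_G(G/K)}$, a power of $\phi_H(y)$ lying in $\calp$. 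To bring part~\eqref{lem:edge_argument:xk_in_the_image_of_edge} to bear I use that $H^0_G(L;\calh^0_G(G/?))$ is the inverse limit of the $\calh^0_G(G/K)$ over the orbit category of $L$, so that restriction along the forgetful functor to $\Or(G;\calf(L))$ gives a ring homomorphism $\iota$ from $\invlim{G/K \in \Or(G;\calf(L))}{\calh^0_G(G/K)}$ into $H^0_G(L;\calh^0_G(G/?))$, while each $G$-map $\sigma\colon G/K\to L$ induces a ring homomorphism $\sigma^\#\colon H^0_G(L;\calh^0_G(G/?))\to\calh^0_G(G/K)$ by functoriality together with the collapse of the spectral sequence over the single orbit $G/K$ (so $\edge_{G/K}=\id$). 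The facts I would verify are: (i) $\calh^0_G(\sigma)=\sigma^\#\circ\edge_L$, by naturality of the edge homomorphism; (ii) $\sigma^\#\circ\iota$ is the projection onto the $G/K$-component of the inverse limit, so that, writing $f^\#$ for $\sigma^\#$ with $\sigma=f$, the structure map $\phi_H$ agrees with $\ind_{H\to G}\circ f^\#\circ\iota$ on that ideal; (iii) the map defining $\II_G(L)$ factors through $\edge_L$, giving $\II_G(L)=\edge_L^{-1}(\II_B)$, where $\II_B\subseteq H^0_G(L;\calh^0_G(G/?))$ is the common kernel of the composites of the $\sigma^\#$ with the augmentations $\calh^0_G(G/K)\to\calh^0(\pt)$; and (iv) $\iota$ maps $\invlim{G/K \in \Or(G;\calf(L))}{\II(K)}$ into $\II_B$, because the $G/K$-component of any of its elements lies in the augmentation ideal $\II(K)$.

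Granting (i)--(iv): fix $y$ in that ideal and apply part~\eqref{lem:edge_argument:xk_in_the_image_of_edge} to $\iota(y)\in H^0_G(L;\calh^0_G(G/?))$ — the rational-vanishing hypothesis on the $d_r^{0,0}$ is exactly the one imposed on $L$ — obtaining $k\ge1$ and $z\in\calh^0_G(L)$ with $\edge_L(z)=\iota(y)^k=\iota(y^k)$; since $y^k$ again lies in that ideal, (iv) and (iii) give $z\in\II_G(L)$. Using (i), (ii) and that $\ind_{H\to G}$ and $f^\#$ are ring homomorphisms,
\[
g_f(z)=\ind_{H\to G}\bigl(f^\#(\edge_L(z))\bigr)=\ind_{H\to G}\bigl(f^\#(\iota(y^k))\bigr)=\phi_H(y)^k.
\]
As $z\in\II_G(L)$ and $g_f(\II_G(L))\subseteq\calp$, we conclude $\phi_H(y)^k\in\calp$, hence $\phi_H(y)\in\calp$ by primeness; thus $\im(\phi_H)\subseteq\calp$, and the assumed implication gives $\II(H)\subseteq\calp$, which is condition~\eqref{the:strategy_of_proof_of_the_completion_theorem:ideals} for $H$, $\calp$ and $f$. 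The step I expect to be the main obstacle is the bookkeeping (i)--(iv) of part~\eqref{lem:edge_argument:improving_the_conditions} — above all, identifying $\II_G(L)$ with the $\edge_L$-preimage of the total augmentation kernel in $H^0_G(L;\calh^0_G(G/?))$ and checking that $\iota$ respects ring and ideal structures; part~\eqref{lem:edge_argument:xk_in_the_image_of_edge} itself is a routine spectral-sequence manipulation.
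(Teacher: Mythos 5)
Your proposal is correct and follows essentially the same route as the paper: part~(1) is the identical induction on pages of the spectral sequence using the Leibniz rule to kill the torsion differential of a power, and part~(2) is the same scheme of transporting an element of $\invlim{G/K \in \Or(G;\calf(L))}{\II(K)}$ into $H^0_G(L;\calh^0_G(G/?))$, applying part~(1) to lift a power along $\edge^{0,0}$ into $\II_G(L)$, and concluding via primeness of $\calp$. The only cosmetic difference is that you build the comparison map $\iota$ directly from the limit description of $H^0_G$ over the orbits of $L$, whereas the paper routes it through the classifying space $\EGF{G}{\calf(L)}$ and the isomorphism $H^0_G\left(\EGF{G}{\calf(L)};\calh^0_G(G/?)\right) \cong \invlim{G/K \in \Or(G;\calf(L))}{\calh_K^0(\pt)}$; both yield the same homomorphism.
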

\begin{proof}~\eqref{lem:edge_argument:xk_in_the_image_of_edge}
Consider $x \in H^0_G(L;\calh^0_G(G/?))$. We construct inductively
positive integers $k_1$, $k_2$, $\ldots,$ $k_{l}$ such that
\[
\begin{array}{rclr}
x^{\prod_{i=1}^{r} k_i} & \in & E^{0,0}_{r+1} & \quad \text{ for } r = 1,2, \ldots ,l;
\end{array}
\]
Put $k_1 = 1$. We have $H^0_G(L;\calh^0_G(G/?)) = E^{0,0}_2$ and hence
$x = x^1 = x^{\prod_{i=1}^{1} k_i} \in E^{0,0}_2$.  This finishes
the induction beginning $r = 1$.

In the induction step from $(r - 1)$ to $r \ge 2$
we can assume that we have already constructed
$k_1, \ldots, k_{r-1}$ and shown that
$x^{\prod_{i=1}^{r-1} k_i}$ belongs to $E^{0,0}_r$. Now choose $k_r$ with
$k_r \cdot d^{0,0}_r\left(x^{\prod_{i=1}^{r-1} k_i}\right) = 0$.
This is possible since by assumption
$d^{0,0}_r \otimes_{\IZ} \id_{\IQ} = 0$. For any element
$y \in E^{0,0}_{r}$ one checks inductively for $j = 1,2, \ldots$
\[
d^{0,0}_r(y^j) = j \cdot d^{0,0}_r(y) \cdot y^{j-1}.
\]
This implies
\[
d^{0,0}_r\left(x^{\prod_{i=1}^{r} k_i}\right)
=
d^{0,0}_r\left(\left(x^{\prod_{i=1}^{r-1} k_i}\right)^{k_{r}}\right)
=
k_{r} \cdot d^{0,0}_ {r}\left(x^{\prod_{i=1}^{r-1} k_i}\right) \cdot
\left(x^{\prod_{i=1}^{r-1}}\right)^{k_r -1}
=
0.
\]
Since $E^{0,0}_{r+1}$ is the kernel of
$d^{0,0}_r \colon E^{0,0}_r \to E^{0,0}_{r+1}$, we conclude
$x^{\prod_{i=1}^r k_i} \in E^{0,0}_{r+1}$. Since $L$ is $l$-dimensional, we get
for $k = \prod_{i=1}^{l} k_i$ that $x^k \in E_{\infty}^{0,0}$.
Since $E_{\infty}^{0,0}$ is the image of the edge homomorphism
$\edge^{0,0}$, assertion~\ref{lem:edge_argument:xk_in_the_image_of_edge}
follows.
\\[2mm]~\eqref{lem:edge_argument:improving_the_conditions}
Consider the following commutative diagram
\[
\xymatrix{
&
H^0_G\left(\EGF{G}{\calf(L)};\calh^0_G(G/?)\right)
\ar[d]^-{H^0_G(u)} \ar[r]^-{\alpha}_-{\cong}
&
\invlim{G/K \in \Or(G;\calf(L))}{\calh_K^0(\pt)}
\ar[dddd]^{\Phi_H}
\\
\calh^0_G(L) \ar[r]^-{\edge^{0,0}} \ar[d]^-{\calh^0_G(f)} &
H^0_G\left(L;\calh^0_G(G/?)\right)
\ar[d]^-{H^0_G(f)}
&
\\
\calh^0_G(G/H) \ar[r]^-{\edge^{0,0}}_-{\cong}
\ar[rrdd]^{\ind_{H \to G}}_{\cong}&
H^0_G\left(G/H;\calh^0_G(G/?)\right)
\ar[rdd]^{\quad\ind_{H \to G} \circ H^0_G(i_H)}
&
\\ & & 
\\ & & \calh_H^0(\pt)
}
\]
Here $\alpha$ is the  isomorphism, which sends
$v \in H^0_G\left(\EGF{G}{\calf(L)};\calh^0_G(G/?)\right)$
to the system of elements that  is for $G/K \in \Or(G;\calf(L))$ the image of $v$ under the
homomorphism
\begin{multline*}
H^0_G\left(\EGF{G}{\calf(L)};\calh^0_G(G/?)\right)
\xrightarrow{H^0_G(i_K)}
H^0_G\left(G/K;\calh^0_G(G/?)\right)
\\
\xrightarrow{(\edge^{0,0})^{-1}} \calh^0_G(G/K)
\xrightarrow{\ind_{\{1\} \to K}}
\calh_K^0(\pt),
\end{multline*}
for the up to $G$-homotopy unique $G$-map $i_K \colon G/K \to \EGF{G}{\calf(L)}$. The $G$-map
$u \colon L \to \EGF{G}{\calf(L)}$ is the up to $G$-homotopy
unique $G$-map from $L$ to the classifying space of the family
$\calf(L)$, and $\Phi_H$ is the structure map of the inverse limit for
$H$. We have to prove
that $\II(H)$ is contained in the prime ideal $\calp$ provided that 
$\calp$ contains  the image of $\II_G(L)$ under the composite
$\calh^0_G(L) \xrightarrow{\calh^0_G(f)} \calh^0_G(G/H)
\xrightarrow{\ind_{H \to G}} \calh^0_H(\pt)$.

Consider $a \in \invlim{G/K \in \Or(G;\calf(L))}{\II(K)}$.
Let $x \in H^0_G\left(L;\calh^0_G(G/?)\right)$ be the image of
$a$ under the composite
\begin{multline*}
\invlim{G/K \in \Or(G;\calf(L))}{\II(K)} \to
\invlim{G/K \in \Or(G;\calf(L))}{\calh_K^0(\pt)}
\\
\xrightarrow{\alpha^{-1}}
H^0_G\left(\EGF{G}{\calf(L)};\calh^0_G(G/?)\right)
\\
\xrightarrow{H^0_G\left(u;\calh^0_G(G/?)\right) }
H^0_G\left(L;\calh^0_G(G/?)\right).
\end{multline*}
We conclude from assertion~\eqref{lem:edge_argument:xk_in_the_image_of_edge} that for some
positive number $k$ there is an element $y \in \calh^0_G(L)$ with $\edge^{0,0}(y) = x^k$.
One easily checks that $y$ belongs to $\II_G(L)$, just inspect the diagram above for $H =
\{1\}$. Hence  the composite
\[
\calh^0_G(L) \xrightarrow{\calh^0_G(f)} \calh^0_G(G/H) \xrightarrow{\ind_{H \to G}}
\calh^0_H(\pt)
\]
maps $y$ to $\calp$ by assumption. An easy diagram chase shows that
\[
\phi_H \colon \invlim{G/K \in \Or(G;\calf(L))}{\II(K)} \to \II(H)
\]
maps $a^k$ to $\calp$. Since $\calp$ is a prime ideal and $\phi_H$ is multiplicative,
$\phi_H $ sends $a$ to $\calp$. 
Hence the image of $\phi_H \colon \invlim{G/K \in \Or(G;\calf(L))}{\II(K)} \to \II(H)$ lies  $\calp$.  
Hence we get by assumption $\II(H) \subseteq \calp$.  This finishes the proof of Lemma~\ref{lem:edge_argument}.
\end{proof}


\typeout{--------------------   Section 5 --------------------------------------}

\section{The Segal Conjecture for Infinite Groups}
\label{sec:The_Segal_Conjecture_for_Infinite_Groups}

In this section we prove the Segal
Conjecture~\ref{the:Segal_Conjecture_for_infinite_groups} for infinite groups.  It is just
the Completion Theorem formulated in
Problem~\ref{pro:formulation_of_the_Completion_Theorem} for equivariant stable cohomotopy
$\calh_?^* = \pi_?^*$ under the condition that there is an upper bound on the orders
of finite subgroups on $L$ and $L$ has finite dimension.

\begin{proof}[Proof of Theorem~\ref{the:Segal_Conjecture_for_infinite_groups}]
We want to apply Theorem~\ref{the:strategy_of_proof_of_the_completion_theorem} and
therefore have to prove 
conditions~\eqref{the:strategy_of_proof_of_the_completion_theorem:Notherian}, %
\eqref{the:strategy_of_proof_of_the_completion_theorem:fin._gen.}, %
\eqref{the:strategy_of_proof_of_the_completion_theorem:ideals}
and~\eqref{the:strategy_of_proof_of_the_completion_theorem:finite_groups} appearing there.

Condition~\eqref{the:strategy_of_proof_of_the_completion_theorem:Notherian} appearing there
is satisfied because of $\pi^0_s(\pt) = \IZ$.

Condition~\eqref{the:strategy_of_proof_of_the_completion_theorem:fin._gen.}  is satisfied
because of Example~\ref{exa:equivariant_cohomotopy}.

Next we prove condition~\eqref{the:strategy_of_proof_of_the_completion_theorem:ideals}.
Recall the assumption that there is an upper bound on the orders of finite subgroups of
$L$ and that $L$ is finite dimensional.  Recall that $\calf(L)$ denotes the family of
finite subgroups $H \subseteq G$ with $L^H \not= \emptyset$. We can find by
Example~\ref{exa:equivariant_cohomotopy} for every $q \in \IZ$  with $q \not= 0$ 
a positive integer $C(q)$ such that the order of
$\pi^q_H(\pt)$ divides $C(q)$ for every $H \in \calf(L)$. Furthermore recall that $L$ is
finite dimensional.  Consider the equivariant cohomological Atiyah-Hirzebruch spectral
sequence converging to $\pi^{p+q}_G(L)$. Its $E_2$-term is given by
\[
E^{p,q}_2 = H^p_G(L;\pi^q_G(\pt)).
\]
Therefore $E^{r,1-r}_r$ is
annihilated by multiplication with $C(1-r)$ and hence rationally trivial for $r \ge 2$.
Hence for $r \ge 2$ the differential
\[
d^{0,0}_r \colon E^{0,0}_r \to E^{r,1-r}_r
\]
vanishes rationally. We have shown that the
conditions appearing in Lemma~\ref{lem:edge_argument} are satisfied. Hence in order to
verify  condition~\eqref{the:strategy_of_proof_of_the_completion_theorem:ideals}, it suffices to
prove for any family $\calf$ of subgroups of $G$ with the property that  there exists an upper bound on the
orders of subgroups appearing $\calf$, any $H \in \calf$ and any prime ideal $\calp$ of
the Burnside ring $A(H)$ that $\calp$ contains the augmentation ideal $\II_H$ provided
$\calp$ contains the image of the structure map for $H$ of the inverse limit
\[
\phi_H \colon \invlim{G/K \in \Or(G;\calf)}{\II_K} \to \II_H.
\]

  Fix a finite group $H$. We begin with recalling some basics about
  the prime ideals in the Burnside ring $A(H)$ taken
  from~\cite{Dress(1969)}. In the
  sequel $p$ is a prime number or $p = 0$. For a subgroup $K \subseteq
  H$ let $\calp(K,p)$ be the preimage of $p\cdot \IZ$ under the
  character map for $K$
  \[
\character^H_K \colon A(H) \to \IZ, \quad [S] \mapsto
  |S^K|.
\]
  This is a prime ideal and each prime ideal of $A(H)$ is of
  the form $\calp(K,p)$.  If $\calp(K,p) = \calp(L,q)$, then $p = q$.
  If $p$ is a prime, then $\calp(K,p) = \calp(L,p)$ if and only if
  $(K[p]) = (L[p])$, where $K[p]$ is the minimal normal subgroup of
  $K$ with a $p$-group as quotient. Notice for the sequel that $K[p] =
  \{1\}$ if and only if $K$ is a $p$-group.  If $p = 0$, then
  $\calp(K,p) = \calp(L,p)$ if and only if $(K) = (L)$.

  Fix a prime ideal $\calp = \calp(K,p)$.
  Choose a positive integer $m$ such
  that $|H|$ divides $m$ for all $H\in \calf$. Fix $H \in \calf$.
  Choose a free $H$-set $S$ together with a
  bijection $u \colon S \xrightarrow{\cong} [m]$, where $[m] = \{1,2,
  \ldots , m\}$. Such $S$ exists since $|H|$ divides $m$ and we can
  take for $S$ the disjoint union of $\frac{m}{|H|}$ copies of $H$.
  Thus we obtain an injective group homomorphism
  \[
\rho_u \colon H \to S_m, \quad h \mapsto u \circ l_h \circ u^{-1},
\]
  where $l_h \colon S \to S$ is given by left multiplication with $h$
  and $S_m = \aut([m])$ is the group of permutations of $[m]$. Let
  $S_m[\rho_u]$ denote the $H$-set obtained from $S_m$ by the
  $H$-action $h \cdot \sigma := \rho_u(h) \circ \sigma$.  Let
  $\Syl_p(S_m)$ be the $p$-Sylow subgroup of $S_m$.  Let
  $S_m/\Syl_p(S_m)[\rho_u]$ denote the $H$-set obtained from the
  homogeneous space $S_m/\Syl_p(S_m)$ by the $H$-action given by $h
  \cdot \overline{\sigma} = \overline{\rho_u(h) \circ \sigma}$.  The
  $H$-action on $S_m[\rho_u]$ is free. If for $K \subseteq H$ we have
  $\left(S_m/\Syl_p(S_m)[\rho_u]\right)^K \not= \emptyset$, then for
  some $\sigma \in S_m$ we get $\rho_u(K) \subseteq \sigma \cdot
  \Syl_p(S_m) \cdot \sigma^{-1}$ and hence $K$ must be a $p$-group.

  Suppose that $T$ is another free $H$-set together with a bijection
  $v \colon T \xrightarrow{\cong} [m]$.  Then we can choose an
  $H$-isomorphism $w \colon S \to T$. Let $\tau \in S_m$ be given by
  the composition $v \circ w \circ u^{-1}$. Then $c(\tau) \circ \rho_u
  = \rho_v$ holds, where $c(\tau) \colon S_m \to S_m$ sends $\sigma$
  to $\tau \circ \sigma \circ \tau^{-1}$.  Moreover, left
  multiplication with $\tau$ induces isomorphisms of $H$-sets
\begin{eqnarray*}
S_m[\rho_u] & \cong_H & S_m[\rho_v];
\\
S_m/\Syl_p(S_m)[\rho_u] & \cong_H & S_m/\Syl_p(S_m)[\rho_v].
\end{eqnarray*}
Hence we obtain elements in A(H)
\begin{eqnarray*}
[S_m] & :=  & [S_m[\rho_u]];
\\
{[S_m/\Syl_p(S_m)]} & := & {[S_m/\Syl_p(S_m)[\rho_u]]},
\end{eqnarray*}
which are independent of the choice of $S$ and $u \colon S
\xrightarrow{\cong} [m]$. If $i \colon H_0 \to H_1$ is an
injective group homomorphisms between elements in $\calf$, then
one easily checks that the restriction homomorphism $A(i) \colon
A(H_1) \to A(H_0)$ sends $[S_m]$ to $[S_m]$ and
$[S_m/\Syl_p(S_m)]$ to $[S_m/\Syl_p(S_m)]$. Thus we obtain
elements
\[
[[S_m]], [[S_m/\Syl_p(S_m)]] \in \invlim{G/K \in
  \Or(G;\calf)}{A(K)}
\]
Define elements
\[
|S_m| \cdot 1, |S_m/\Syl_p(S_m)| \cdot 1 \in \invlim{G/K \in
  \Or(G;\calf)}{A(K)}
\]
by the collection of elements $|S_m| \cdot
[K/K]$ and  $|S_m/\Syl_p(S_m)| \cdot [K/K]$ in $A(K)$ for $K \in \calf$.
Thus we get elements
\[
[[S_m]] - |S_m| \cdot 1, [[S_m/\Syl_p(S_m)]] - |S_m/\Syl_p(S_m)| \cdot 1 \in
\invlim{G/K \in \Or(G;\calf)}{\II_K}.
\]
The image of $[[S_m]] - |S_m| \cdot 1$ and
$[[S_m/\Syl_p(S_m)]] - |S_m/\Syl_p(S_m)| \cdot 1$ respectively under the structure map of
the inverse limit $\invlim{G/K \in \Or(G;\calf)}{\II_K}$ for the object $G/H \in
\Or(G;\calf)$ is $[S_m] - |S_m| \cdot [H/H]$ and $[S_m/\Syl_p(S_m)] - |S_m/\Syl_p(S_m)|
\cdot [H/H]$. Hence by assumption 
\begin{eqnarray*}
[S_m] - |S_m| \cdot [H/H] & \in & \calp(K,p);
\\ 
{[S_m/\Syl_p(S_m)]} - |S_m/\Syl_p(S_m)| \cdot [H/H] & \in & \calp(K,p).
\end{eqnarray*}
Therefore $\character^H_K \colon
A(H) \to \IZ$ sends both $[S_m] - |S_m| \cdot [H/H]$ and $[S_m/\Syl_p(S_m)] -
|S_m/\Syl_p(S_m)| \cdot [H/H]$ to elements in $p \IZ$. Since $\character_K^H([S_m] -
|S_m| \cdot [H/H]) = 0 - |S_m|$ for $K \not= \{1\}$, we conclude that $K = \{1\}$ or that
$p \not= 0$.  If $K = \{1\}$, then $\II(H) = \calp(\{1\},0)$ is contained in
$\calp(K,p)$. Suppose that $K \not= \{1\}$. Then $p$ is a prime.  We have
\begin{multline*}
  \character^H_K\left([S_m/\Syl_p(S_m)]  - |S_m/\Syl_p(S_m)| \cdot [H/H]\right)  \\
  = \left|\left(S_m/\Syl_p(S_m)\right)^K\right| - |S_m/\Syl_p(S_m)|.
\end{multline*}
Since this integer must belong to $p \IZ$ and $|S_m/\Syl_p(S_m)|$ is prime to $p$, we get
$\left(S_m/\Syl_p(S_m)\right)^K \not= \emptyset$.  Hence $K$ must be a $p$-group. This
implies $\calp(K,p) = \calp(\{1\},p)$ and therefore $\II(H) = \calp(\{1\},0)
\subseteq \calp(K,p)$.  This finishes the proof of
condition~\eqref{the:strategy_of_proof_of_the_completion_theorem:ideals}.

Condition~\eqref{the:strategy_of_proof_of_the_completion_theorem:finite_groups} follows from
the proof of the Segal Conjecture for a finite group $H$ due to
Carlsson~\cite{Carlsson(1984)}.  This finishes the proof of
Theorem~\ref{the:Segal_Conjecture_for_infinite_groups}.
\end{proof}


\typeout{--------------------   Section 6 --------------------------------------}

\section{An improved Strategy for a Proof of a Completion Theorem}
\label{sec:An_improved_Strategy_for_a_Proof_of_a_Completion_Theorem}

The next result follows from
Theorem~\ref{the:strategy_of_proof_of_the_completion_theorem},
Lemma~\ref{lem:edge_argument} and a construction of a modified
Chern character analogous to the one in~\cite[Theorem~4.6 and Lemma~6.2]{Lueck(2005c)}
which will ensure  that the condition about the differentials in the equivariant Atiyah-Hirzebruch spectral sequence
appearing in Lemma~\ref{lem:edge_argument} is satisfied. We do not give  more details here, since
the interesting case of the Segal Conjecture and of the Atiyah-Segal Completion Theorem are already covered
by Theorem~\ref{the:Segal_Conjecture_for_infinite_groups}  and by~\cite{Lueck-Oliver(2001a)}.

Let $G$ be a (discrete) group. Let $\calf$ be a family of subgroups of $G$ such that there
is an upper bound on the orders of the subgroups appearing $\calf$.  Let $\calh^?_*$ be an
equivariant cohomology theory with values in $R$-modules which satisfies the disjoint
union axiom.  Define a contravariant functor
\begin{equation}
\calh^q_{?}(\pt) \colon \FGINJ \to R\text{-}\MODULES
\label{functor_coming_from_calh}
\end{equation}
with the category $\FGINJ$ of finite groups with injective group homomorphism as  source by sending an injective  homomorphism
$\alpha \colon H \to K$ to the composite
\[
\calh_K^q(\pt) \xrightarrow{\calh^q(\pr)} \calh^q_K(K/H) \xrightarrow{\ind_{\alpha}}
\calh^q_H(\pt),
\]
where $\pr \colon H/K = \ind_{\alpha}(\pt) \to \pt$ is the projection
and $\ind_{\alpha}$ comes from the induction structure of $\calh_?^*$.  Assume that $\calh^?_*$ comes with a multiplicative structure.

\begin{theorem}[Improved Strategy for the proof of
Theorem~\ref{the:Segal_Conjecture_for_infinite_groups}]
\label{the:improved_strategy_of_proof_of_the_completion_theorem}
Suppose that the following
conditions are satisfied.
\begin{enumerate}

\item\label{the:improved_strategy_of_proof_of_the_completion_theorem:Notherian} The ring
  $\calh^0(\pt)$ is Noetherian;

\item\label{the:improved_strategy_of_proof_of_the_completion_theorem:fin._gen.}

  Let $H \subseteq G$ be any finite subgroup and $m \in \IZ$ be any integer.  Then the
  $\calh^0(\pt)$-module $\calh^m_H(\pt)$ is finitely generated, there exists an integer
  $C(H,m)$ such that multiplication with $C(H,m)$ annihilates the torsion-submodule
  $\tors_{\IZ }(\calh^m_H(\pt))$ of the abelian group $\calh^m_H(\pt)$ and the $R$-module 
  $\calh^m_H(\pt)/\tors_{\IZ}(\calh^m_H(\pt))$ is projective;

\item\label{the:improved_strategy_of_proof_of_the_completion_theorem:ideals}

  Let $H$ be any element of $\calf$. Let $\calp \subseteq \calh^0_H(\pt)$ be any prime
  ideal.  Then the augmentation ideal
  \[
\II(H) = \ker \left(\calh^0_H(\pt) \to \calh^0_H(H) \xrightarrow{\cong}
    \calh^0(\pt)\right)
\]
  is contained in $\calp$ if $\calp$ contains the image of the
  structure map for $H$ of the inverse limit
  \[
\phi_H \colon \invlim{G/K \in \Or(G;\calf)}{\II(K)} \to \II(H);
\]

\item\label{the:improved_strategy_of_proof_of_the_completion_theorem:finite_groups}
  The Completion Theorems is true for every finite group
  $H$ in the case $X = L = \pt$ and $f = \id \colon \pt \to \pt$, i.e., for every finite
  group $H$ the map of pro-$\calh^0(\pt)$-modules
\begin{eqnarray*}
 \lambda^m_H(\pt) \colon \{\calh^m_H(\pt)/\II(H)^n\}_{n \ge 1} & \to &
\{\calh^m\left((BH)_{(n-1)}\right)\}_{n \ge 1}
\end{eqnarray*}
defined in~\eqref{map_lambda_of_pro-modules} is an isomorphism of
pro-$\calh^0(\pt)$-modules;

\item\label{the:improved_strategy_of_proof_of_the_completion_theorem:Mackey}
The covariant functor~\eqref{functor_coming_from_calh} extends to a Mackey functor.

\end{enumerate}

Then the Completion Theorem is true for $\calh^?_*$ and every $G$-map
$f \colon X \to L$ from a finite proper $G$-$CW$-complex $X$ to  a proper finite dimensional $G$-$CW$-complex $L$ 
with the property that there is an upper bound on the order of its isotropy groups. 
$L$, i.e.,  the map of
pro-$\calh^0(\pt)$-modules
\begin{eqnarray*}
 \lambda^m_G(X) \colon \left\{\calh^m_G(X)/\II_G(L)^n \cdot \calh^m_G(X)\right\}_{n \ge 1} & \to &
\left\{\calh^m_G\left((EG \times_G X)_{(n-1)}\right)\right\}_{n \ge 1}
\end{eqnarray*}
defined in~\eqref{map_lambda_of_pro-modules} is an isomorphism of
pro-$\calh^0(\pt)$-modules.
\end{theorem}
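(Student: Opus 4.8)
The plan is to reduce the assertion to Theorem~\ref{the:strategy_of_proof_of_the_completion_theorem}, applied to the family $\calf = \calf(L) = \{H \subseteq G \mid L^H \neq \emptyset\}$. Since $L$ is proper with an upper bound on the orders of its isotropy groups, $\calf$ is a family of finite subgroups with an upper bound on their orders, so the hypotheses of Theorem~\ref{the:strategy_of_proof_of_the_completion_theorem} make sense. Three of its four hypotheses are immediate: condition~\eqref{the:strategy_of_proof_of_the_completion_theorem:Notherian} is condition~\eqref{the:improved_strategy_of_proof_of_the_completion_theorem:Notherian}, condition~\eqref{the:strategy_of_proof_of_the_completion_theorem:finite_groups} is condition~\eqref{the:improved_strategy_of_proof_of_the_completion_theorem:finite_groups}, and condition~\eqref{the:strategy_of_proof_of_the_completion_theorem:fin._gen.} is the finite-generation half of condition~\eqref{the:improved_strategy_of_proof_of_the_completion_theorem:fin._gen.}. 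So everything comes down to verifying condition~\eqref{the:strategy_of_proof_of_the_completion_theorem:ideals} of Theorem~\ref{the:strategy_of_proof_of_the_completion_theorem}.

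For this I would invoke Lemma~\ref{lem:edge_argument}\eqref{lem:edge_argument:improving_the_conditions}: its prime-ideal hypothesis is exactly condition~\eqref{the:improved_strategy_of_proof_of_the_completion_theorem:ideals} of the present theorem, so it remains only to check the hypothesis of Lemma~\ref{lem:edge_argument} itself, namely that for $r = 2, 3, \ldots, \dim(L)$ the differential $d^{0,0}_r \colon E^{0,0}_r \to E^{r,1-r}_r$ in the equivariant Atiyah--Hirzebruch spectral sequence of $L$ and $\calh^*_G$ vanishes rationally. Because that spectral sequence carries a multiplicative structure and $E^{0,0}_2 = H^0_G(L;\calh^0_G(G/?))$ sits in its multiplicatively generated corner, it suffices to split the spectral sequence rationally; equivalently, to produce a natural multiplicative transformation from $\calh^*_?(-)\otimes_{\IZ}\IQ$ to an ordinary, Bredon-type, proper equivariant cohomology theory that is an isomorphism on all finite proper $G$-$CW$-complexes, since the Atiyah--Hirzebruch spectral sequence of a Bredon cohomology theory collapses at $E_2$ and the differentials transport across such an equivalence.

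The construction of this transformation is the \emph{modified Chern character} of~\cite[Theorem~4.6 and Lemma~6.2]{Lueck(2005c)}, and conditions~\eqref{the:improved_strategy_of_proof_of_the_completion_theorem:fin._gen.} and~\eqref{the:improved_strategy_of_proof_of_the_completion_theorem:Mackey} are precisely what feed it: the Mackey-functor extension of $\calh^q_?(\pt)$ provided by condition~\eqref{the:improved_strategy_of_proof_of_the_completion_theorem:Mackey} makes the rational idempotent decomposition of Mackey functors over the (rationalized) Burnside ring available, while the finite-generation, bounded-torsion and projectivity assertions in condition~\eqref{the:improved_strategy_of_proof_of_the_completion_theorem:fin._gen.} ensure that the coefficient modules $\calh^q_H(\pt)\otimes_{\IZ}\IQ$ are projective and behave well under induction and restriction, so that the Chern character assembles into a multiplicative natural isomorphism $\calh^*_?(-)\otimes_{\IZ}\IQ \xrightarrow{\cong} \prod_{q} H^{*-q}_G(-;\calh^q_?(\pt)\otimes_{\IZ}\IQ)$ on proper finite $G$-$CW$-complexes, hence on every skeleton of $L$. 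This yields the required rational vanishing of the $d^{0,0}_r$, so Lemma~\ref{lem:edge_argument}\eqref{lem:edge_argument:improving_the_conditions} gives condition~\eqref{the:strategy_of_proof_of_the_completion_theorem:ideals} of Theorem~\ref{the:strategy_of_proof_of_the_completion_theorem}, and that theorem then delivers the claim.

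The hard part will be the construction and verification of the modified Chern character with all of its structure --- multiplicativity, naturality with respect to the induction structure, and the rational-isomorphism property --- under exactly these hypotheses; this is essentially a transcription of the arguments of~\cite{Lueck(2005c)}, and it is the step the paper chooses not to spell out in full, since the cases of genuine interest (the Segal Conjecture and the Atiyah--Segal Completion Theorem) are already covered by Theorem~\ref{the:Segal_Conjecture_for_infinite_groups} and by~\cite{Lueck-Oliver(2001a)}.
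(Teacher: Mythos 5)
Your proposal follows exactly the route the paper itself indicates for this result: reduce to Theorem~\ref{the:strategy_of_proof_of_the_completion_theorem} with $\calf = \calf(L)$, check its Noetherian, finite-generation and finite-group conditions directly from the hypotheses, and obtain its prime-ideal condition from Lemma~\ref{lem:edge_argument}~\eqref{lem:edge_argument:improving_the_conditions}, whose rational-vanishing hypothesis on the differentials $d^{0,0}_r$ is supplied by the modified Chern character of~\cite[Theorem~4.6 and Lemma~6.2]{Lueck(2005c)} constructed from the Mackey-functor and coefficient-module conditions. The paper deliberately leaves that Chern-character step as a sketch, and your write-up matches its argument (adding a little more detail on why the multiplicative rational splitting forces the collapse), so it is correct and essentially identical in approach.
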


\begin{remark}\label{rem:advantage_of_the_improved_version} 
  The advantage of Theorem~\ref{the:improved_strategy_of_proof_of_the_completion_theorem}
  in comparison with Theorem~\ref{the:strategy_of_proof_of_the_completion_theorem} is that
  the conditions do not involve $L$ and $f \colon X \to L$ anymore and do only depend on
  the functor $\calh^q_?(\pt) \colon \FGINJ \to \IZ\text{ -}\MODULES$.  If one considers
  the case $R = \IZ$ and assumes $\calh^0(\pt) = \IZ$, then
  condition~\eqref{the:improved_strategy_of_proof_of_the_completion_theorem:Notherian} is
  obviously satisfied and
  condition~\eqref{the:improved_strategy_of_proof_of_the_completion_theorem:fin._gen.}
  reduces to the condition that for any finite subgroup $H \subseteq G$ and any integer $m
  \in \IZ$ the abelian group $\calh^m_H(\pt)$ is finitely generated.
\end{remark}

\begin{remark}[Family version]\label{rem:family_version}
We mention without proof that there is a also a family version of
Theorem~\ref{the:Segal_Conjecture_for_infinite_groups}. Its formulation is analogous to the one
of the family version of the Atiyah-Segal Completion Theorem for infinite groups,
see~\cite[Section~6]{Lueck-Oliver(2001b)}.
\end{remark}


\typeout{-------------------- References -------------------------------}

\addcontentsline{toc<<}{section}{References}



\end{document}